\def\qed{\hfill {\hbox{${\vcenter{\vbox{               
   \hrule height 0.4pt\hbox{\vrule width 0.4pt height 6pt
   \kern5pt\vrule width 0.4pt}\hrule height 0.4pt}}}$}}}
\newtheorem*{theorem*}{Theorem}
\newtheorem{theorem}{Theorem}
\newtheorem{definition}{Definition}
\newtheorem{lemma}[theorem]{Lemma}
\newtheorem{proposition}[theorem]{Proposition}
\newcommand{\rst}[1]{\ensuremath{{\mathbin\vert}%
\raise-.5ex\hbox{$#1$}}}
\date{}
\title{\Large \textbf{Simple, locally finite dimensional Lie algebras in positive characteristic}}
\author{Johanna Hennig} 
\begin{document}


\maketitle

\begin{abstract}
We prove two structure theorems for simple, locally finite dimensional Lie algebras over an algebraically closed field of characteristic $p$ which give sufficient conditions for the algebras to be of the form $[R^{(-)}, R^{(-)}] / (Z(R) \cap [R^{(-)}, R^{(-)}])$ or $[K(R, *), K(R, *)]$ for a simple, locally finite dimensional associative algebra $R$ with involution $*$. The first proves that a condition we introduce, known as locally nondegenerate, along with the existence of an ad-nilpotent element suffice. The second proves that a uniformly ad-integrable Lie algebra is of this type if the characteristic of the ground field is sufficiently large. Lastly we construct a simple, locally finite dimensional associative algebra $R$ with involution $*$  such that $K(R, *) \ne [K(R, *), K(R, *)]$ to demonstrate the necessity of considering the commutator in the first two theorems.
\end{abstract}


\smallskip


\section{\large \textbf{Introduction}}

An important result which characterizes the simple, finite dimensional Lie algebras over a field of positive characteristic is the Kostrikin-Strade-Benkart theorem (\cite{K2}, \cite{S}, \cite{Be}): Suppose $L$ is a simple, finite dimensional Lie algebra over a field $F$ of characteristic $p > 5$. If $L$ is nondegenerate and there exists a nonzero element $x \in L$ such that $ad(x)^{p-1}$ = 0, then $L$ is a Lie algebra of classical type. This theorem was improved by Premet in \cite{P}: Every finite dimensional, nondegenerate simple Lie algebra over an algebraically closed field of characteristic $p > 5$ is classical. 

\bigskip

\noindent Suppose $L$ is a simple, infinite dimensional Lie algebra over an algebraically closed field of characteristic zero. In \cite{BBZ}, Bahturin, Baranov, and Zalesskii prove that $L$ embeds into a locally finite associative algebra if and only if $L$ is isomorphic to $[K(R, *), K(R, *)]$ where $*$ is an involution and $R$ is an involution simple locally finite associative algebra. This utilizes and extends earlier work of Baranov in \cite{B}. In \cite{Za}, Zalesskii asks for a characterization of such Lie algebras when the ground field is algebraically closed of positive characteristic. 

\bigskip

\noindent The following result extends the Kostrikin-Strade-Benkart theorem to the infinite dimensional case and also addresses Zalesskii's question in the spirit of this theorem.

\bigskip

\begin{theorem}
Let $L$ be a simple, infinite dimensional, locally finite Lie algebra over an algebraically closed field $F$ of characteristic $p > 7$ or characteristic zero. Then the following conditions are equivalent:
\begin{enumerate}
\item $L$ is locally nondegenerate and there is some nonzero element $x \in L$ such that $ad(x)^{p-1} = 0$. 
\item $L \cong [R^{(-)}, R^{(-)}] / (Z(R) \cap [R^{(-)}, R^{(-)}])$ where $R$ is a locally finite, simple associative algebra or $L \cong [K(R, *), K(R, *)]$ where $R$ is a locally finite, simple associative algebra with involution $*$.
\end{enumerate}
\end{theorem}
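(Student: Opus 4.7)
The direction $(2) \Rightarrow (1)$ is essentially a verification. A simple locally finite associative algebra $R$ (possibly with involution) is a direct limit of full matrix algebras with compatible involutions, so the Lie algebras in (2) are direct limits of classical finite dimensional Lie algebras. These inherit local nondegeneracy from nondegeneracy of classical Lie algebras in characteristic $p>3$, and the required ad-nilpotent element can be produced from a nilpotent element $a\in R$ of small nilpotency index sitting in a small matrix block.

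For $(1)\Rightarrow(2)$, my plan is as follows. Express $L=\bigcup L_\alpha$ as a directed union of finite dimensional subalgebras, each containing the distinguished element $x$. Using the locally nondegenerate hypothesis, refine this to a cofinal subsystem of nondegenerate finite dimensional subalgebras. Premet's theorem \cite{P} (applicable because $p>5$) then shows that each simple constituent of each $L_\alpha$ is classical; for those constituents into which $x$ projects nontrivially, the Kostrikin-Strade-Benkart theorem gives the same conclusion directly. The stronger bound $p>7$, combined with the hypothesis $\operatorname{ad}(x)^{p-1}=0$ and simplicity of $L$, should exclude exceptional simple constituents, leaving only simples of matrix type $A_n$, orthogonal type $B_n$, $D_n$, or symplectic type $C_n$.

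The next step is to pass to the direct limit. Each inclusion $L_\alpha\hookrightarrow L_\beta$ is a diagonal embedding of semisimple classical Lie algebras. Using the techniques of Baranov \cite{B} and Bahturin-Baranov-Zalesskii \cite{BBZ}, extended to positive characteristic, and exploiting the simplicity of $L$, one should show that the system eventually stabilizes to one of two cases: either all matrix type, in which case the associative envelopes form a direct system whose limit is a simple locally finite associative algebra $R$ with $L\cong [R^{(-)}, R^{(-)}]/(Z(R)\cap [R^{(-)}, R^{(-)}])$; or all orthogonal/symplectic type, in which case the embeddings are compatible with involutions and one assembles $R$ together with an involution $*$ so that $L\cong [K(R,*), K(R,*)]$.

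The main obstacle I anticipate lies in this final assembly step: coherently gluing the associative envelopes (and, where relevant, the involutions) of the $L_\alpha$ into a single simple locally finite associative algebra $R$, and verifying that simplicity of $L$ transfers to simplicity of $R$. A secondary technical point concerns the initial refinement: the cofinal system of nondegenerate subalgebras must be arranged to contain $x$, which requires combining local nondegeneracy with local finiteness in a careful enlargement argument around $x$.
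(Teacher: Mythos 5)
There are genuine gaps in both directions. For $(2)\Rightarrow(1)$, your opening claim --- that a simple locally finite associative algebra is a direct limit of full matrix algebras --- is false: such an algebra need not be locally simple, nor even locally semisimple (the algebra constructed in Theorem 3 of this very paper is simple and locally finite yet is not a direct limit of simple algebras). The finitely generated subalgebras $A_\alpha$ can have nonzero radicals, so one must pass through the Wedderburn decomposition $A_\alpha = B_\alpha\oplus N_\alpha$ (Lemma \ref{levi}), use a polynomial-identity argument (Lemma \ref{pi}) to guarantee that some semisimple part contains a block $M_n(F)$ with $n\ge 2$, and invoke Herstein's theorems for simplicity of the quotient Lie algebras. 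Your verification of local nondegeneracy and of the existence of an ad-nilpotent element therefore rests on a false structural premise, although the intended conclusions are recoverable by the route just described.

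For $(1)\Rightarrow(2)$ the outline does not work. The step ``refine to a cofinal subsystem of nondegenerate finite dimensional subalgebras'' is unjustified: local nondegeneracy of $L$ is a condition on $S$-sequences in the ambient algebra and gives no control over the finite dimensional subalgebras $L_\alpha$ themselves, which may be degenerate and have large solvable radicals, so Premet's theorem cannot be applied to their constituents. Even granting classical constituents, the assembly of associative envelopes along diagonal embeddings is precisely the content of \cite{B} and \cite{BBZ} in characteristic zero, and those arguments do not transfer to characteristic $p$ --- which is why the paper takes an entirely different route: Kostrikin's descent lemma converts the hypothesis $ad(x)^{p-1}=0$ into a Jordan element; local nondegeneracy forces the Jordan algebra $L_x$ to contain a nonzero idempotent (via Albert's theorem on nil locally finite Jordan algebras and Proposition \ref{Kloc}); the idempotent lifts to a von Neumann regular element whose associated $\mathfrak{sl}_2$-triple induces a finite $\mathbb{Z}$-grading $L=L_{-2}+\dots+L_2$; and Zelmanov's classification of simple finitely graded Lie algebras then yields the possible forms of $L$, after which local finiteness of $R$ is established by Lemma \ref{loc finite}, the TKK case is realized as $K(R,*)$ by an explicit direct limit, and the exceptional case is excluded by the centroid argument. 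None of these ingredients appears in your outline, and the ``main obstacle'' you flag at the end is exactly the part that cannot be filled in by the characteristic-zero methods you cite.
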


\noindent For the definition of locally nondegenerate, see section \ref{LND}. We note that over a field of characteristic zero, the condition that $L$ be locally nondegenerate is trivial. Also, when the characteristic of the ground field is zero, the condition that $ad(x)^{p-1} = 0$ for some $x$ simply means that there exists some ad-nilpotent element (the index of nilpotence is inconsequential). Thus, our proof of Theorem 1 also provides a new proof of the main theorem in \cite{BBZ}.

\bigskip

\noindent Another result of this kind does not assume $L$ is locally nondegenerate, but imposes a stronger assumption on the characteristic of the ground field. We say $L$ is \textit{uniformly ad-integrable} if the adjoint representation is uniformly integrable, that is, for each element $a \in L$, $ad(a)$ is a root of some polynomial $f_a(t) \in F[t]$. Note that a simple, locally finite Lie algebra embeds into a locally finite associative algebra if and only if it is uniformly ad-integrable. For $a \in L$, let $\mu_a(t)$ be the minimal polynomial of $ad(a)$ and define $d(L) = min\{ \mbox{deg $\mu_a(t)$} |  a\mbox{ is not ad-nilpotent} \}$. 

\bigskip

\begin{theorem}
Let $L$ be a simple, locally finite Lie algebra which is ad-integrable over an algebraically closed field $F$ of characteristic $p>3$. If $p > 2d(L) - 2$, then $L \cong [R^{(-)}, R^{(-)}] / (Z(R) \cap [R^{(-)}, R^{(-)}])$ where $R$ is a locally finite, simple associative algebra or $L \cong [K(R, *), K(R, *)]$ where $R$ is a locally finite, simple associative algebra with involution $*$.
\end{theorem}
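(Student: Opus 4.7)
The plan is to reduce Theorem~2 to Theorem~1 by deriving both of the latter's hypotheses — local nondegeneracy and the existence of a nonzero $x\in L$ with $ad(x)^{p-1}=0$ — from uniform ad-integrability together with the bound $p>2d(L)-2$. Since $L$ is uniformly ad-integrable and $F$ is algebraically closed, every $ad(a)$ admits a Jordan decomposition $ad(a)=S_a+N_a$ in $\text{End}_F(L)$ with $S_a$ semisimple, $N_a$ nilpotent, and both expressible as polynomials in $ad(a)$ without constant term; hence both are derivations of $L$, and the nilpotence index of $N_a$ is at most the largest multiplicity of a root of $\mu_a(t)$, so in particular at most $\deg \mu_a(t)$.

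For the ad-nilpotent element I would pick $a\in L$ non-ad-nilpotent with $\deg \mu_a(t)=d(L)$, which exists by definition of $d(L)$ (note $d(L)\ge 2$, since $\deg \mu_a=1$ would force $ad(a)$ to be scalar, which is ruled out by the simplicity of $L$). Then $N_a^{d(L)}=0$ and $d(L)\le p-1$, using $p>2d(L)-2\ge d(L)$ once $d(L)\ge 2$, so any $x\in L$ with $ad(x)=N_a$ automatically satisfies $ad(x)^{p-1}=0$. To realize $N_a$ as an inner derivation, I would localize: choose a finite-dimensional, $ad(a)$-invariant subalgebra $M\subseteq L$ on which $N_a|_M$ is a nilpotent derivation, and then enlarge $M$ inside $L$ so that the corresponding derivation becomes inner (exploiting that $L$ is simple locally finite and invoking a modular inner-derivation result for the relevant class of finite-dimensional simple Lie subalgebras). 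This yields the required $x$.

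The harder half is local nondegeneracy. Assume for contradiction that some finitely generated, hence finite-dimensional, subalgebra of $L$ contains a nonzero sandwich element $z$ with $(ad\,z)^2=0$. Since $L$ is ad-integrable, $z$ together with a chosen non-ad-nilpotent $a$ realizing $\deg \mu_a(t)=d(L)$ generate a finite-dimensional associative algebra $A$, inside which I would analyse the interaction between $z$ and the Jordan decomposition of $ad_A(a)$. The bound $p>2d(L)-2$ is precisely what should rule out the modular collapse in $A$ that would otherwise be compatible both with the sandwich identity $[z,[z,\cdot]]=0$ and with $\deg \mu_a(t)=d(L)$, producing a contradiction of Premet type. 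Once local nondegeneracy is in hand, Theorem~1 applies directly and yields the desired structure of $L$. The principal obstacle is exactly this step: a sandwich element is automatically ad-nilpotent, so it does not a priori violate the degree condition defining $d(L)$, which concerns only non-ad-nilpotent elements; its elimination must therefore couple the sandwich identity to the Jordan structure of a non-ad-nilpotent element of minimal degree, and it is here that the stronger bound $p>2d(L)-2$ (rather than just $p>d(L)$, which suffices for producing the ad-nilpotent element) ought to play its essential role.
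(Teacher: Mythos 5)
Your plan hinges on reducing Theorem~2 to Theorem~1 by proving that $L$ is locally nondegenerate, but that is precisely the step you do not carry out, and it is the crux of the whole problem: you describe it as ruling out a ``modular collapse of Premet type'' without giving any argument, and you even misstate what must be excluded (the failure of local nondegeneracy is the existence of a nonzero $x$ all of whose $S$-sequences terminate for every finite $S$, not the existence of a sandwich element in some finite-dimensional subalgebra). The paper never proves local nondegeneracy under the hypotheses of Theorem~2; it deliberately routes around it. Its actual argument is: take $a$ with $\deg\mu_a=d$, use the root space decomposition of $L$ relative to $ad(a)$ to show every nonzero root vector $x$ satisfies $ad(x)^dL=0$ (since there are at most $d$ roots and $\beta,\beta+\alpha,\dots,\beta+d\alpha$ are distinct when $p>d$), apply Kostrikin's descent to get Jordan elements, use the exponential automorphisms $\exp(\xi\,ad(x))$ (this is where $p>2d-2$ enters, via Lemma \ref{ideal}) to show the span of Jordan elements is an ideal, hence all of $L$; then, assuming every local Jordan algebra $L_x$ is locally nilpotent, it derives a contradiction from a minimal nonsolvable finite-dimensional subalgebra generated by Jordan elements, using the Kostrikin--Zelmanov sandwich theorem and Premet's lemma. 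Once some $L_x$ has an idempotent, one proceeds as in the second half of the proof of Theorem~1 (von Neumann regular element, finite grading, Zelmanov's classification) --- no appeal to local nondegeneracy of $L$ is ever needed.

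Your construction of the ad-nilpotent element is also unsound as stated. The nilpotent part $N_a$ of the Jordan decomposition of $ad(a)$ may well be zero (if $ad(a)$ is semisimple), in which case your approach produces nothing; and even when $N_a\neq 0$ there is no general result letting you ``enlarge $M$ so that the derivation becomes inner'' --- $N_a$ is an outer derivation in general, and inner-derivation theorems for modular finite-dimensional simple Lie algebras do not transfer to arbitrary finite-dimensional subalgebras of a locally finite $L$. The paper's route is both simpler and correct: any nonzero homogeneous element of a nonzero root space is already ad-nilpotent of index at most $d\le p-1$. In short, the easy half of your reduction is done the hard (and broken) way, and the hard half is not done at all.
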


\bigskip

\noindent The condition that $p > 2d(L)-2$ is trivial when $F$ has characteristic zero, and so Theorem 2 also reduces to the result of \cite{BBZ} in this case.

\bigskip

\noindent In \cite{H}, Herstein asks if there is a simple associative algebra $R$ with involution $*$ such that $K(R, *) \ne [K(R, *), K(R, *)]$. An example of a division ring with this property was provided in \cite{L}. However, this example is not locally finite. In the final section, we prove the following theorem.

\bigskip

\begin{theorem}\label{example}
There exists a simple, locally finite associative algebra $R$ with involution $*$ so that $K(R, *) \ne [K(R, *), K(R, *)]$.
\end{theorem}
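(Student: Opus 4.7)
The plan is to construct $R$ as a direct limit $\bigcup_n R_n$ of simple finite-dimensional associative algebras $R_n$ with compatible involutions $*_n$, chosen so that a distinguished element of $K(R_1, *_1)$ never lies in $[K(R_n, *_n), K(R_n, *_n)]$ for any $n$. I would detect this by a coherent family of linear functionals on the skew parts that vanish on Lie commutators.

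I would begin by locating a finite-dimensional obstruction: a simple $F$-algebra $S$ with involution $\sigma$ and an element $k \in K(S, \sigma)$ with $k \notin [K(S, \sigma), K(S, \sigma)]$. A natural candidate in characteristic $p$ is a matrix algebra with involution whose skew Lie algebra fails to be perfect in that characteristic; for example in characteristic $2$ one has $\mathfrak{sp}_2 = \mathfrak{sl}_2$ with $[\mathfrak{sl}_2, \mathfrak{sl}_2]$ strictly smaller than $\mathfrak{sl}_2$, so a suitable off-diagonal skew element serves as the obstruction $k$. Next I would arrange unital embeddings $\phi_n : R_n \hookrightarrow R_{n+1}$ intertwining the involutions, of the form $a \mapsto a \otimes 1_{B_n}$ with product involution $*_n \otimes \tau_n$ for auxiliary involutive pairs $(B_n, \tau_n)$. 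Unital embeddings force $R = \bigcup_n R_n$ to be simple, since any nonzero two-sided ideal of $R$ meets some $R_n$ in a nonzero ideal, which by simplicity equals $R_n$ and therefore contains the unit; local finiteness is automatic from the construction.

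The main obstacle is the coherence of the obstruction in the limit: I need linear functionals $\lambda_n : K(R_n, *_n) \to F$ that vanish on $[K(R_n, *_n), K(R_n, *_n)]$, satisfy $\lambda_{n+1} \circ \phi_n = \lambda_n$, and are nonzero on the successive images of $k$. Any such $\lambda_n$ is essentially a reduced trace on the skew part, so the construction reduces to a careful choice of sizes and involution types (orthogonal, symplectic, or exchange) for the $(B_n, \tau_n)$ so that the iterated traces rescale in a controllable way and do not annihilate $\phi_{n-1} \cdots \phi_1(k)$ at any stage. Passing to the inductive limit then produces a nonzero linear functional $\lambda : K(R, *) \to F$ vanishing on $[K(R, *), K(R, *)]$, which proves $k \notin [K(R, *), K(R, *)]$ and hence $K(R, *) \ne [K(R, *), K(R, *)]$.
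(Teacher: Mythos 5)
Your skeleton --- a direct limit of finite stages carrying compatible trace-like functionals that vanish on commutators but not on a distinguished skew element --- is the right shape, and it is in fact the shape of the paper's proof. But two of your specific commitments break the argument outside characteristic $2$, and characteristic $\ne 2$ is the case that matters here (the example is meant to accompany Theorems 1 and 2, which live in characteristic $0$ or $p>7$). First, you insist that each finite stage $R_n$ be \emph{simple}. Over an algebraically closed field of characteristic $\ne 2$, a finite-dimensional simple algebra with involution is $M_m(F)$ with an orthogonal or symplectic involution, so $K(R_n,*)$ is $\mathfrak{o}_m$ or $\mathfrak{sp}_m$, and these are perfect except for the degenerate case $\mathfrak{o}_2$, which cannot persist along an increasing chain. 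Hence for large $n$ there is no nonzero functional $\lambda_n$ vanishing on $[K(R_n,*),K(R_n,*)]$, and your coherent family cannot exist. The obstruction has to come from the \emph{exchange} involution on the non-simple algebra $M_{n}(F)\oplus M_{n}(F)$, whose skew part is isomorphic to $\mathfrak{gl}_{n}(F)$ and carries the trace of the first component as the required functional; this is what the paper uses. Your characteristic-$2$ candidate proves only a characteristic-$2$ statement.

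Second, once the finite stages are non-simple, simplicity of the limit is no longer automatic from unital embeddings and must be engineered. The paper embeds $M_{n_i}(F)\oplus M_{n_i}(F)$ into $M_{n_{i+1}}(F)\oplus M_{n_{i+1}}(F)$ with block multiplicities $(k+1,k)$ where $p=2k+1$ is odd: both multiplicities being positive forces any nonzero element of one stage to generate the whole of the next stage (so the limit is simple), while the first-component trace is multiplied by $(k+1)-k=1$ at each step and therefore never dies. You defer exactly these two choices to ``a careful choice of sizes and involution types,'' but they constitute the entire content of the construction; as written, the proposal does not yet yield the theorem in the characteristics where it is needed.
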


\bigskip

\noindent This demonstrates the necessity of considering the commutator $[K(R, *), K(R, *)]$ in Theorem 1 and Theorem 2.

\section{\large \textbf{Preliminaries}}

\noindent An algebra $A$ is \textit{locally finite dimensional} (\textit{locally finite}) if any finitely generated subalgebra is finite dimensional. It is straightforward to check that a subalgebra or quotient of a locally finite algebra is also locally finite. An algebra is \textit{locally nilpotent} if every finitely generated subalgebra is a finite dimensional nilpotent algebra. An algebra is \textit{locally solvable} if every finitely generated subalgebra is a finite dimensional solvable algebra. The following lemma is well-known.

\begin{lemma} \label{loc nilp}
A locally nilpotent algebra cannot be simple. A locally solvable Lie algebra cannot be simple.
\end{lemma}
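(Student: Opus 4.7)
I would split the two claims: the Lie case of the first statement follows immediately from the second, since any locally nilpotent Lie algebra is locally solvable. The associative case of the first statement I would handle in parallel using the Levitzki radical in place of the locally solvable radical, but the substance is in the Lie/solvable statement, so I concentrate on that.

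The main tool is the \emph{locally solvable radical} $R(L)$ of a Lie algebra $L$, defined as the sum of all locally solvable ideals of $L$. Step one is to verify that $R(L)$ is itself a locally solvable ideal; this boils down to the fact that a sum of finitely many locally solvable ideals is locally solvable (which in turn uses that an extension of a solvable algebra by a solvable algebra is solvable, applied to the appropriate intersections), combined with the observation that any finitely generated subalgebra of an infinite sum of ideals is already contained in a finite sub-sum. Once $R(L)$ is known to be an ideal, simplicity forces $R(L)=0$ or $R(L)=L$, and the lemma reduces to the statement that $L$ itself cannot be locally solvable. Since $[L,L]$ is a non-zero ideal of the simple algebra $L$ we conclude $[L,L]=L$, whence $L^{(n)}=L$ for every $n$ in the derived series; the task is to reconcile this with local solvability.

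The main obstacle is precisely this reconciliation: local solvability only guarantees that each fixed finitely generated subalgebra $L_{0}$ has $L_{0}^{(n)}=0$ for some $n=n(L_{0})$, and expressing a given element $a\in L=L^{(n)}$ as an $n$-fold iterated commutator typically requires new elements at each level, so the relevant finitely generated subalgebras grow with $n$ and the naive bound $n(L_{0})\le\dim L_{0}$ does not by itself yield a contradiction. Overcoming this is the delicate step; the standard way is to invoke results from the structure theory of infinite-dimensional locally finite Lie algebras (e.g.\ those developed by Amayo and Stewart), which settle that no non-abelian simple Lie algebra can be locally solvable. The associative version of the first statement then follows by exactly the same outline, with $R(L)$ replaced by the Levitzki radical and the final non-triviality fact supplied by the classical result that a simple associative algebra has vanishing Levitzki radical.
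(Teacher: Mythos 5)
The paper itself states this lemma without proof (it is introduced as ``well-known''), so there is no argument of the author's to compare against; judged on its own terms, your proposal has a genuine gap at exactly the point you yourself flag as delicate. The detour through the locally solvable radical buys nothing: if $L$ is locally solvable, then $L$ is a locally solvable ideal of itself, so $R(L)=L$ automatically and the radical machinery returns you to the starting point. After the correct observations that $[L,L]=L$, hence $L^{(n)}=L$ for all $n$, and that this does not naively contradict local solvability because the finitely generated subalgebras needed to witness an $n$-fold commutator expression grow with $n$, your resolution of the difficulty is to invoke a literature result asserting that no simple Lie algebra can be locally solvable. That assertion \emph{is} the second sentence of the lemma, so as a proof the argument is circular: the one substantive step has been outsourced to a citation of the conclusion.

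The step can be closed by an elementary, self-contained argument, which is what the lemma really calls for. Fix $0\ne a\in L$. Simplicity gives $L=[L,L]$ (an abelian algebra is not simple) and $L=\mathrm{id}_L(a)$, hence $a\in[\mathrm{id}_L(a),\mathrm{id}_L(a)]$; expanding, $a$ is a finite sum of brackets $[P_m,Q_m]$ in which each $P_m,Q_m$ is either a scalar multiple of $a$ or a left-normed commutator $[a,x_1,\dots,x_r]$ involving finitely many elements $x_i\in L$. Let $B$ be the subalgebra generated by $a$ together with all the $x_i$ that occur; by local solvability $B$ is finite dimensional and solvable. Put $J=\mathrm{id}_B(a)$. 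Every $P_m,Q_m$ lies in $J$, so $a\in[J,J]$; since $[J,J]$ is an ideal of $B$, this forces $J=\mathrm{id}_B(a)\subseteq[J,J]$, and inductively $J\subseteq J^{(n)}\subseteq B^{(n)}$ for every $n$, contradicting the solvability of $B$ unless $a=0$. The locally nilpotent case (associative, or Lie) is identical with $A=A^2$ in place of $L=[L,L]$: one gets $J\subseteq J^2$, hence $J\subseteq B^k$ for every $k$, contradicting nilpotence of $B$; no appeal to the Levitzki radical is needed.
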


\noindent Let $R$ be an associative algebra. We use the notation $R^{(-)}$ to denote the Lie algebra induced by the commutator of the associative product. It is straightforward to check that if $R$ is locally finite as an associative algebra, then $R^{(-)}$ is locally finite as a Lie algebra. In \cite{H}, Herstein proves the following: 
Suppose $R$ is a simple associative algebra with center $Z$ over a field $F$ of characteristic other than $2$ and $R$ is not 4-dimensional over $Z$. Then $[R^{(-)}, R^{(-)}] / (Z \cap [R^{(-)}, R^{(-)}])$ is a simple Lie algebra.

\bigskip

\noindent Let $A$ be an arbitrary algebra over a field $F$. An involution $*: A \rightarrow A$ is a linear transformation such that $(a^*)^* = a$ and $(ab)^* = b^*a^*$ for any $a$, $b$ in $A$. If $*$ is an involution of an associative algebra $R$, the set of skew-symmetric elements $K(R, *) = \{ x \in R | x^* = -x \}$ is a Lie algebra. In \cite{H}, the following theorem is also proved: 
Suppose $R$ is a simple associative algebra over $F$ of characteristic other than $2$ with involution $*$ and $R$ is more than 16-dimensional over its center $Z$. If $K = K(R, *)$ then $[K,K]/(Z\cap [K,K])$ is a simple Lie algebra.

\bigskip

\noindent The following two lemmas are classical results which appear in \cite{BZ} as lemmas 2.4 and 2.3, respectively. We do not assume the ground field $F$ is of characteristic zero, as in \cite{BZ}. However, $F$ is perfect since $F$ is algebraically closed, so we can apply the Wedderburn Principle Theorem in the proof of lemma \ref{levi} and otherwise the proof remains the same as presented in \cite{BZ}.

\begin{lemma}\label{levi}
Suppose $A$ is a finite dimensional, associative algebra with involution $*: A \rightarrow A$ over an algebraically closed field $F$ of characteristic other than 2. Let $N$ be the radical of $A$, which is the largest nilpotent ideal. Then $A = B \oplus N$, where $B \cong A/N$ is a semisimple subalgebra invariant under $*$. 
\end{lemma}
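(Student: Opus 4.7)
The plan is to apply the classical Wedderburn Principal Theorem to produce an initial (not necessarily $*$-invariant) Levi decomposition, and then to deform the semisimple complement by an inner automorphism in $1+N$ until it becomes $*$-invariant. Since $F$ is algebraically closed and hence perfect, the quotient $A/N$ is a separable $F$-algebra, so the Wedderburn Principal Theorem furnishes a semisimple subalgebra $B_0 \subset A$ with $A = B_0 \oplus N$ and $B_0 \cong A/N$.

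I would first observe that the radical is $*$-invariant: $N^*$ is a two-sided ideal of $A$, and it is nilpotent because $(N^*)^k = (N^k)^*$, so $N^* \subseteq N$ by maximality, and equality follows by applying $*$ once more. Consequently $B_0^*$ is another semisimple complement to $N$, and by the uniqueness clause of the Wedderburn--Malcev theorem we may write $B_0^* = u B_0 u^{-1}$ for some $u \in 1+N$. Applying $*$ to this identity and using that $*$ is an involution gives $B_0^* = u^* B_0 (u^*)^{-1}$ as well, so $u^{-1} u^*$ normalizes $B_0$. Since $B_0 \cap N = 0$, any element of $1+N$ normalizing $B_0$ in fact centralizes it (the induced automorphism of $B_0$ differs from the identity by an element of $B_0 \cap N = 0$), and so $u^{-1}u^* \in Z_A(B_0) \cap (1+N)$.

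The crux is then to find $v \in 1+N$ such that $B := v B_0 v^{-1}$ satisfies $B^* = B$. Unwinding this identity shows it is equivalent to the existence of $v \in 1+N$ with $v^* v = uc$ for some $c \in Z_A(B_0) \cap (1+N)$. Here the hypothesis $\mathrm{char}\, F \neq 2$ enters decisively via a square-root construction: since $N$ is nilpotent, the truncated exponential and logarithm give a bijection between $N$ and $1+N$ satisfying $(\exp x)^* = \exp(x^*)$. Thus if a central correction $c$ can be chosen so that $uc$ is $*$-symmetric, writing $uc = \exp(x)$ forces $x^* = x$, and setting $v = \exp(x/2)$ yields $v^* = v$ with $v^* v = v^2 = uc$, as desired.

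The main obstacle is producing this symmetrizing $c$, for which the constraint $u^{-1}u^* \in Z_A(B_0) \cap (1+N)$ must be exploited. I expect this to reduce, after passing through $\log$, to solving an affine equation of the form $y - y^* = w$ inside the nilpotent Lie subalgebra corresponding to $Z_A(B_0) \cap N$, where $w$ records the logarithmic defect $\log(u^{-1}u^*)$; the averaging map $y \mapsto \tfrac{1}{2}(y - y^*)$, valid precisely because $\mathrm{char}\, F \neq 2$, solves such an equation order by order along the nilpotent filtration of $N$. Once $v$ is obtained, $B := v B_0 v^{-1}$ is automatically semisimple and complements $N$, yielding the desired $*$-invariant decomposition $A = B \oplus N$.
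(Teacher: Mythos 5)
The paper does not actually prove this lemma: it cites Lemma 2.4 of \cite{BZ} and only remarks that the characteristic-zero hypothesis there can be replaced by perfectness of $F$, so that the Wedderburn Principal Theorem still applies. Your overall strategy --- start from an arbitrary Wedderburn complement $B_0$, note that $N^* = N$ so $B_0^*$ is another complement, invoke Malcev conjugacy to write $B_0^* = uB_0u^{-1}$ with $u \in 1+N$, and then correct $B_0$ by a conjugation that makes it $*$-invariant --- is the standard route, and your reductions are set up correctly: elements of $1+N$ normalizing $B_0$ do centralize it (compare $B_0$- and $N$-components of $(1+n)b = \sigma(b)(1+n)$), and $vB_0v^{-1}$ is $*$-invariant precisely when $v^*v \in u\cdot(Z_A(B_0)\cap(1+N))$.

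Two points, however, are genuine gaps. First, the crux step is only asserted (``I expect this to reduce to\dots''), and the equation you write down is not quite the right one: $*$ does not preserve $Z_A(B_0)$ --- it carries it onto $Z_A(B_0^*) = uZ_A(B_0)u^{-1}$ --- so the involution acting on $Z_A(B_0)\cap(1+N)$ must be the twisted map $c \mapsto u^{-1}c^*u$, and one must verify the compatibility identity $u^{-1}(u^{-1}u^*)^*u = (u^{-1}u^*)^{-1}$ before any order-by-order averaging can be run. Second, and more seriously, the truncated exponential and logarithm do not exist when $\mathrm{char}\,F = p$ is at most the nilpotency index of $N$ (already $p=3$ with $N^3 \neq 0$ requires dividing by $3!$), and the lemma allows all odd $p$; so the construction $v = \exp(x/2)$ fails as written. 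This is repairable: either use the truncated binomial series for $(1+n)^{1/2}$, whose coefficients lie in $\mathbb{Z}[1/2]$ and which commutes with $*$, or --- cleaner --- induct on the nilpotency index of $N$ so as to reduce to the case $N^2 = 0$, where $u = 1+n$ forces $n^*-n \in Z_A(B_0)\cap N$ and the explicit choice $v = 1+\tfrac{n}{2}$ gives $v^*v = u\bigl(1+\tfrac{n^*-n}{2}\bigr)$ of the required form. As it stands, though, the argument does not cover all characteristics other than $2$, and its key step remains a conjecture rather than a proof.
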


\begin{lemma}\label{matrix}
Suppose $B$ is a semisimple, finite dimensional associative algebra over an algebraically closed field $F$ with involution $*: B \rightarrow B$. Then $K(B,*) \cong \bigoplus_{i \in \Omega_0} K(M_{n_i}(F), *) \bigoplus_{i \in \Omega_1} M_{n_i}(F)^{(-)}$, where the involution $*$ restricted to $M_{n_i}(F)$ is transposition or the symplectic involution.
\end{lemma}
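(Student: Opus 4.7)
The plan is to apply Wedderburn's theorem and analyze how $*$ permutes the simple summands. Since $F$ is algebraically closed and $B$ is semisimple, $B \cong \bigoplus_{j} M_{n_j}(F)$. The involution is an anti-automorphism, so it permutes the minimal two-sided ideals of $B$, and since $*$ squares to the identity, the orbits of this permutation have size one or two. I would let $\Omega_0$ index the $*$-fixed ideals and $\Omega_1$ index a set of representatives for the two-element orbits. On each two-element orbit the two simple factors must have the same matrix size because $*$ restricts to an anti-isomorphism between them. Thus
\[
B \;=\; \bigoplus_{i \in \Omega_0} M_{n_i}(F) \;\oplus\; \bigoplus_{i \in \Omega_1} \bigl( M_{n_i}(F) \oplus M_{n_i}(F) \bigr),
\]
and $K(B,*)$ splits as a direct sum of the skew parts of the individual blocks.

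For $i \in \Omega_0$ the restriction of $*$ is an involution on $M_{n_i}(F)$. The key classical input, which is really the content of the lemma, is that over an algebraically closed field of characteristic $\ne 2$ every involution on $M_n(F)$ is conjugate to either the transpose or, when $n$ is even, the symplectic involution. This follows from Skolem--Noether applied to the automorphism $x \mapsto (x^t)^*$: it is inner, say conjugation by some $u$, and demanding that the composition be an involution forces $u^t = \pm u$, which distinguishes the two cases. So this block contributes exactly $\bigoplus_{i \in \Omega_0} K(M_{n_i}(F),*)$ with $*$ of the asserted type.

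For $i \in \Omega_1$ I would write the orbit as $M_n(F) \oplus M_n(F)$ and let $\sigma$ be the anti-automorphism of $M_n(F)$ obtained by transporting $*$ through the swap; then $(a,b)^* = (\sigma^{-1}(b), \sigma(a))$, so the skew elements on this block are exactly $\{ (a, -\sigma(a)) : a \in M_n(F) \}$. A short check using that $\sigma$ reverses multiplication (so that $\sigma([a,c]) = -[\sigma(a),\sigma(c)]$) shows the projection $(a, -\sigma(a)) \mapsto a$ is a Lie algebra isomorphism onto $M_n(F)^{(-)}$. Summing over both cases gives the claimed decomposition.

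The main nontrivial ingredient is the classification of involutions on $M_n(F)$ as transpose or symplectic; everything else is bookkeeping about how $*$ acts on the Wedderburn summands, together with the trivial Lie algebra identification in the swapped case.
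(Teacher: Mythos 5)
Your argument is correct and is essentially the standard proof of this classical fact, which the paper does not prove itself but cites from Baranov--Zalesskii (lemma 2.3 of \cite{BZ}): Wedderburn decomposition, the action of $*$ on the simple ideals with orbits of size one or two, Skolem--Noether to classify the involutions on the fixed blocks as orthogonal or symplectic, and the exchange-type identification $K(M_n(F)\oplus M_n(F),*)\cong M_n(F)^{(-)}$ on the swapped pairs. No gaps; the only implicit hypothesis is $\operatorname{char}F\neq 2$, which is in force throughout the paper.
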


\subsection{\large \textbf{$\mathbb{Z}$-graded algebras}}\label{GL}

A $\mathbb{Z}-$grading of an algebra $A$ is a decomposition into a sum of subspaces $A = \sum_{i \in \mathbb{Z}} A_i$ such that $A_iA_j \subseteq A_{i+j}$. Such a grading is finite if the set $\{ i \in \mathbb{Z}| A_i \ne (0)\}$ is finite and the grading is nontrivial if $\sum_{i \ne 0} A_i \ne (0)$.

\begin{lemma}\label{simplegraded}
Suppose $R = R_{-n} + \dots + R_n$ is a simple, associative algebra with nontrivial $\mathbb{Z}$-grading such that $\bigcup_{i \ne 0} R_i \ne 0$. Then $R$ is generated by $\bigcup_{i \ne 0} R_i$.
\end{lemma}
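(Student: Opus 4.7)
The plan is to let $S$ denote the subalgebra of $R$ generated by $\bigcup_{i\neq 0} R_i$ and to prove $S = R$. Because $S$ is generated by homogeneous elements, it is automatically a $\mathbb{Z}$-graded subalgebra with $S_i = R_i$ for every $i\neq 0$. The problem therefore reduces to showing $S_0 = R_0$; the grading does the rest.

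To bring simplicity into play, let $I$ be the two-sided ideal of $R$ generated by $\bigcup_{i\neq 0} R_i$. By hypothesis this generating set is nonzero, so $I\neq 0$, and simplicity forces $I = R$. Since $I$ is generated by homogeneous elements, it is a graded ideal, so $R = I = \bigoplus_i I_i$ and in particular $I_0 = R_0$. Thus it is enough to show $I_0 \subseteq S$.

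Now I would write $I_0$ explicitly as the linear span of products of the form $axb$, $ax$, $xb$, or $x$, where $x \in R_i$ for some $i \neq 0$, $a,b \in R$, and the total degree is zero. A single $x$ cannot occur because $i\neq 0$. For the terms $ax$ or $xb$ of total degree $0$, the free factor must have degree $-i\neq 0$ and so lies in $S$, hence so does the product. For $axb$, decompose $a = \sum_p a_p$ and $b = \sum_q b_q$ homogeneously; only summands with $p+i+q = 0$ contribute. Case analysis: if $p,q\neq 0$ all three factors lie in $S$; if $p = 0, q\neq 0$ then $b_q\in S$ and $a_p x \in R_0 R_i \subseteq R_i \subseteq S$, so $axb = (a_p x)b_q \in S$; the case $p\neq 0, q=0$ is symmetric; and $p = q = 0$ would force $i = 0$, contradicting $i\neq 0$. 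Hence every spanning element of $I_0$ lies in $S$, giving $R_0 = I_0 \subseteq S$ and thus $S = R$.

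The main obstacle is really just bookkeeping rather than a conceptual difficulty: one must be careful that $R$ need not be unital, so the ideal generated by a set $X$ has the explicit form $X + XR + RX + RXR$, and one must keep track of the homogeneous decompositions of $a$ and $b$ to see why a degree-zero component never spoils the argument. Once these points are set up, the three ingredients — $S$ is graded because its generators are homogeneous, $I = R$ because $R$ is simple, and the degree-sum constraint $p+i+q = 0$ with $i\neq 0$ — combine mechanically to yield $S = R$.
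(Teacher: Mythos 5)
Your proof is correct. The paper states Lemma \ref{simplegraded} without proof, and your argument is the standard one: the ideal generated by $\bigcup_{i\neq 0}R_i$ is graded and nonzero, hence equals $R$, and the degree-count $p+i+q=0$ with $i\neq 0$ forces every degree-zero component of a generator $a_pxb_q$ (or $a_px$, $xb_q$) to factor through elements of nonzero degree, so it already lies in the subalgebra $S$. The non-unital bookkeeping ($I = X+XR+RX+RXR$) and the observation that $R_0R_i\subseteq R_i\subseteq S$ handle the only delicate cases, so nothing is missing.
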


\noindent We note that the same result holds for a simple, graded Lie algebra.

\bigskip

\noindent The following lemma will be used in section \ref{proof1} to show that a locally nondegenerate simple Lie algebra $L$ embeds into a locally finite associative algebra. For a set $X$, we utilize the notation Lie$(X)$ to denote the Lie algebra generated by $X$ and Assoc$(X)$ to denote the associative algebra generated by $X$.

\begin{lemma} \label{loc finite}
Suppose $R$ is an associative algebra over $F$ with a finite nontrivial $\mathbb{Z}$-grading, $R = \sum_{i=-M}^{M} R_i$, where $F$ is of characteristic 0 or $p > M$. Suppose $a_1, \dots, a_n \in R$ such that $a_i \in R_{\alpha_i}$ where $\alpha_i \ne 0$ for all $i$. If Lie$(a_1, \dots, a_n)$ is finite dimensional, then Assoc$(a_1, \dots, a_n)$ is finite dimensional as well. 
\end{lemma}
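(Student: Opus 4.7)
Set $A := \text{Assoc}(a_1, \dots, a_n)$. Since the generators are homogeneous, $A$ inherits the $\mathbb{Z}$-grading of $R$ and is supported in degrees $[-M, M]$, so $A = A_{-M}\oplus\cdots\oplus A_M$; it suffices to show each $A_j$ is finite dimensional. The first key observation is that any homogeneous $x \in R_\gamma$ with $\gamma \ne 0$ is nilpotent, because $x^{M+1} \in R_{(M+1)\gamma}$ lies outside the supported range. In particular each $a_i^{M+1}=0$ and each $\operatorname{ad}(a_i)$ acts nilpotently on $R$. The characteristic hypothesis (char $F = 0$ or $p > M$) enters precisely so that the factorials up to $M!$ are invertible in $F$, which is what makes the exponential and divided-power formulas for these nilpotent derivations well defined.

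Let $L := \text{Lie}(a_1, \dots, a_n)$, which is finite dimensional by hypothesis and inherits the grading $L = L_{-M}\oplus\cdots\oplus L_M$. Since $L$ and $\{a_1, \dots, a_n\}$ generate the same associative algebra, $A = \text{Assoc}(L)$. Fixing a homogeneous basis $\{e_1, \dots, e_d\}$ of $L$, the Poincar\'e--Birkhoff--Witt theorem applied to the surjection $U(L)\twoheadrightarrow A$ shows that $A$ is spanned by the images of ordered monomials $e_1^{m_1}\cdots e_d^{m_d}$. For each basis vector $e_i$ of nonzero degree, the nilpotence observation above forces $m_i \le M$, so those directions contribute only boundedly to any spanning ordered monomial.

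The remaining and principal task is to bound the exponents of the zero-graded basis vectors, or equivalently to show that the subalgebra $B := \text{Assoc}(L_0)\subseteq A$ is finite dimensional. My plan is to write each $h\in L_0$ as an iterated Lie bracket of the nonzero-graded generators $a_i$, expand in terms of the associative product, and then use the nilpotence relations $a_i^{M+1}=0$ together with the exponential automorphisms $\exp(\operatorname{ad}(a_i))$ of $R$ (well defined under $p>M$) to reduce any long product of elements of $L_0$ to a combination of products of bounded length. Combined with the PBW bound $m_i\le M$ in the nonzero-graded directions, this produces a finite spanning set of $A$, proving $\dim A<\infty$. This last step, namely extracting finitely many independent monomials in the degree-zero direction from the Lie finiteness of $L_0$, is the main obstacle, and is precisely where the characteristic hypothesis is essential.
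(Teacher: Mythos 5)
There is a genuine gap. Your setup is fine: $A=\mathrm{Assoc}(a_1,\dots,a_n)$ inherits the grading, homogeneous elements of nonzero degree are nilpotent, and the whole difficulty is concentrated in the degree-zero component. But you then explicitly leave that component as a ``plan'' rather than a proof, and the plan as stated does not work. A finite dimensional abelian Lie algebra can generate an infinite dimensional associative algebra (a polynomial ring), so finiteness of $L_0$ as a Lie algebra gives you nothing for free; you must use the fact that the degree-zero part of a word in the $a_i$ is a product of commutators $[y_1,y_1']\cdots[y_s,y_s']$ with $|y_i|=-|y_i'|$, and you need a concrete device to bound $s$. The paper's device is the identity
\begin{equation*}
[x_1x_2\cdots x_s,\,a,a,\dots,a] \;=\; s!\,[x_1,a]\cdots[x_s,a] \;+\; \textstyle\sum(\text{words of length}<s),
\end{equation*}
which (after grouping commutators to share a common right-hand generator $a$) lets one solve for $[x_1,a]\cdots[x_s,a]$ by dividing by $s!$; the product $x_1\cdots x_s$ lives in degree $-s\alpha$ and hence vanishes for $s>M$, so $s!$ is invertible exactly under the hypothesis $p>M$. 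Nothing in your proposal substitutes for this identity.

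Two further problems with the route you sketch. First, your appeal to $\exp(\operatorname{ad}(a_i))$ is not justified by the stated hypothesis: on a $(2M+1)$-term grading one only gets $\operatorname{ad}(a_i)^{2M+1}=0$, so the exponential requires inverting factorials up to $(2M)!$, i.e.\ $p>2M$, which is stronger than the assumed $p>M$. Second, it is not clear that exponential automorphisms would shorten products of degree-zero elements even if they were defined; they act on $L$, not on long associative monomials in $L_0$. So the central step of the lemma --- the only step where both the grading bound $M$ and the characteristic hypothesis are actually consumed --- is missing.
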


\begin{proof}
We will show that there is an $N \ge 1$ such that any associative word $a_{i_1} \cdots a_{i_k}$ can be written as a sum of words of the form $\rho_1 \cdots \rho_s$, where each $\rho_i \in$ Lie$(a_1, \dots, a_n)$ and $s \le N$. From this it follows that if dim Lie$(a_1, \dots, a_n) = d$, then dim Assoc$(a_1, \dots, a_n) $ $\le d^{N+1} < \infty$.

\bigskip

\noindent Suppose $w = a_{i_1} \cdots a_{i_k}$ and proceed by induction on the length of $w$. Since $a_ia_j = [a_i, a_j] + a_ja_i$, we can move generators with negative degrees to the left and generators with positive degrees to the right at the expense of introducing commutators, which result in new words of shorter length. Thus $w$ can be written as $w = w_- w_0 w_+ + \sum $ (words of shorter length than $w$).  

\bigskip

\noindent Suppose $w_- = a_{k_1} \dots a_{k_l}$ where $a_{k_i} \in R_{\alpha_{k_i}}$ and $-M \le \alpha_{k_i} < 0$. Then $w_- \in R_{\alpha_{k_1} + \dots + \alpha_{k_l}}$ which becomes $0$ once $\alpha_{k_1} + \dots + \alpha_{k_l} < -M$. Hence, the length of $w_- = l \le M$. Similarly, the length of $w_+$ is also at most $M$. It remains to bound the length of $w_0$.

\bigskip

\noindent Since the generators each have non-zero grading, $w_0 = [y_1, y_1'] \dots [y_s, y_s']$ must be a product of commutators, where $|y_i| = -|y_i'|.$ We may write each commutator as a sum of left-normed commutators, so $w_0$ is a sum of elements of the form $[y_1", a_{j_1}] \dots [y_s", a_{j_s}]$ for $y_i" \in $ Lie$(a_1, \dots, a_n)$. Then since $[y_i", a_{j_i}][y_k", a_{j_k}] = [ [y_i", a_{j_i}][y_k", a_{j_k}] ] + [y_k", a_{j_k}][y_i", a_{j_i}]$, we may group commutators with the same righthand generator together, again at the expense of introducing new words of shorter length. Thus without loss of generality we may consider only products of this type: $[x_1, a] \dots [x_s, a]$ for some generator $a$ and $x_i \in $ Lie$(a_1, \dots, a_n)$.

\bigskip

\noindent By repeatedly using the fact that ad$(a)$ is a derivation, one can prove that $[x_1 x_2 \cdots x_s, a, a, \dots, a] = s! [x_1, a] \dots [x_s, a] + \sum ( $words of length $ < s )$. Thus, assuming char $F = 0$ or $p > M$, 

\begin{eqnarray*}
[x_1, a] \dots [x_s, a] = \frac{1}{s!} ( [x_1 x_2 \cdots x_s, a, a, \dots, a] - \sum ( \mbox{words of length} < s  ))
\end{eqnarray*}

\noindent If $|a| = \alpha$, then $|x_i| = -\alpha$, and $|x_1 \dots x_s| = -s\alpha$. Hence, $x_1 \dots x_s = 0$ if $-s\alpha < -M$ or $-s\alpha > M$, i.e. if $s > M$. Therefore,  the length of the product of commutators, $s$, is bounded by $M$. So the length of $w_0$ is less than or equal to $Mn$, where $n$ is the number of generators. Thus the length of $w$ is bounded by $N = M(n+2)$, as desired.
\end{proof}

\subsection{\large \textbf{The Centroid and central polynomials}}\label{C}

\noindent For basic results on the centroid, see \cite{J1}. Let $A$ be an algebra over a field $F$ and let End$_F(A)$ denote the algebra of all linear transformations on $A$.

\bigskip

\noindent Let $\psi_a$ (respectively, $\phi_a$) denote left (right) multiplication by $a$. Let $M(A)$ be the subalgebra of End$_F(A)$ generated by $\{ \psi_a, \phi_a | a \in A\}$. We define the \textit{centroid} $\Gamma$ of $A$ to be the centralizer of $M(A)$ in End$_F(A)$. That is, the elements of $\Gamma$ are linear transformations which commute with all right and left multiplications. If $A$ is a simple algebra, then the centroid $\Gamma$ of $A$ is a field.

\begin{lemma}\label{centroid}
Suppose $L$ is a simple, locally finite Lie algebra over $F$. Suppose $\phi \in \Gamma \cap M(L)$. Then $\phi$ is algebraic over $F$.
\end{lemma}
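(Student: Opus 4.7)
The plan is to use local finiteness to verify an algebraic relation on a finite-dimensional subalgebra of $L$, and then lift that relation to all of $L$ via simplicity.

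First, since $\phi \in M(L)$ and, for a Lie algebra, $\psi_a = \mathrm{ad}(a)$ and $\phi_a = -\mathrm{ad}(a)$, I can write $\phi$ as a noncommutative polynomial in finitely many adjoint operators, say $\phi = P(\mathrm{ad}(a_1), \ldots, \mathrm{ad}(a_n))$. If $\phi = 0$ there is nothing to prove, so I may assume $\phi \neq 0$, in which case some $a_i$ is nonzero. Put $L_0 = \mathrm{Lie}(a_1, \ldots, a_n) \subseteq L$; by local finiteness $L_0$ is a nonzero, finite-dimensional Lie subalgebra, and because it is a subalgebra each $\mathrm{ad}(a_i)$, and hence $\phi$, preserves it.

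Next, $\phi|_{L_0}$ is an endomorphism of a finite-dimensional $F$-vector space, so it satisfies some nonzero $m(t) \in F[t]$ (for instance its minimal polynomial). I will show $m(\phi) = 0$ on all of $L$, which will give the lemma. Note that $m(\phi)$ is still a polynomial in the $\mathrm{ad}(a_i)$'s, so $m(\phi) \in M(L)$, and $m(\phi) \in \Gamma$ since $\Gamma$ is closed under polynomial expressions in $\phi$. In particular $m(\phi)$ commutes with every $\mathrm{ad}(x)$, $x \in L$. Consequently $\ker m(\phi)$ is an ideal of $L$: for $x \in L$ and $y \in \ker m(\phi)$, $m(\phi)([x,y]) = [x, m(\phi)(y)] = 0$. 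Since $L_0 \subseteq \ker m(\phi)$ and $L_0 \neq 0$, this ideal is nonzero, so simplicity of $L$ forces $\ker m(\phi) = L$, i.e., $m(\phi) = 0$, and $\phi$ is algebraic over $F$.

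I do not anticipate any real obstacle. The lemma is essentially the observation that an element of $M(L) \cap \Gamma$ is simultaneously \emph{local} (only finitely many $\mathrm{ad}$-generators are needed to express it, so local finiteness of $L$ applies) and \emph{globally coherent} (it commutes with every adjoint, so kernels of polynomial expressions in it are ideals); these two features together reduce the algebraicity of $\phi$ on $L$ to its algebraicity on a single finite-dimensional subalgebra.
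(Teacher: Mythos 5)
Your proof is correct and follows essentially the same route as the paper: express $\phi$ as a polynomial in finitely many adjoint operators, restrict to the finite-dimensional subalgebra they generate to obtain a polynomial $m(t)$ with $m(\phi)$ vanishing there, and then use simplicity to force $m(\phi)=0$ globally. The only cosmetic difference is that you argue via $\ker m(\phi)$ being a nonzero ideal, while the paper invokes the fact that the centroid of a simple algebra is a field; these are the same argument.
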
  

\begin{proof}
Suppose $\phi = \sum_{i=1}^{l} \alpha_i ad(a_{i, 1}) \dots ad(a_{i,n_i})$ for $a_{i,j} \in L$ and $\alpha_i \in F$. Let $0 \ne b \in L$. Then $L_1 := $ Lie$(b, a_{i,j})$ is a finite dimensional subalgebra of $L$ and $\left.\phi\right|_{L_1} \in$ End$_F(L_1)$. Hence, there is a polynomial $f(t) \in F[t]$ such that $f(\phi) \cdot L_1 = 0$. Note that $f(\phi) \in \Gamma$. Since $\Gamma$ is a field and $L$ is an algebra over $\Gamma$, we see that $f(\phi) = 0$. 
\end{proof}

\noindent The following proposition will be used at the end of the proof of Theorem 1.

\begin{proposition}
Suppose $L$ is a simple, locally finite Lie algebra over $F$ which is finite dimensional over its centroid, $\Gamma$. Then $\Gamma$ is an algebraic field extension of $F$. 
\end{proposition}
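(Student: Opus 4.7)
The plan is to reduce this to Lemma \ref{centroid} by a double centralizer / Jacobson density argument. The key observation is that although $\Gamma$ is defined as the centralizer of $M(L)$ in $\mathrm{End}_F(L)$, the finite dimensionality hypothesis over $\Gamma$ will let me recover every $\gamma \in \Gamma$ as an element of $M(L)$ modulo an $F$-scalar, and then Lemma \ref{centroid} finishes it.

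First I would verify that $L$ is a simple module over $M(L)$: any nonzero $M(L)$-submodule $N \subseteq L$ is in particular stable under $\mathrm{ad}(a) = \psi_a$ for every $a \in L$, hence is a Lie ideal, and simplicity of $L$ forces $N = L$. Since by definition $\Gamma$ is exactly $\mathrm{End}_{M(L)}(L)$, and $L$ is simple as an $M(L)$-module, Schur's lemma confirms $\Gamma$ is a division ring (which we already know, in fact a field). Now I would apply the Jacobson density theorem to the unital extension $\widetilde{M}(L) := M(L) + F \cdot \mathrm{id}_L \subseteq \mathrm{End}_F(L)$; this has the same centralizer $\Gamma$ and still acts simply on $L$, so by density $\widetilde{M}(L)$ is dense in $\mathrm{End}_\Gamma(L)$.

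Next, the finite dimensionality assumption $\dim_\Gamma L = d < \infty$ upgrades density to equality: $\widetilde{M}(L) = \mathrm{End}_\Gamma(L) \cong M_d(\Gamma)$. Now pick any $\gamma \in \Gamma$. Scalar multiplication by $\gamma$ is a $\Gamma$-linear endomorphism of $L$, so it lies in $\mathrm{End}_\Gamma(L) = \widetilde{M}(L)$. Hence I can write
\begin{equation*}
\gamma = \alpha \cdot \mathrm{id}_L + \mu
\end{equation*}
for some $\alpha \in F$ and some $\mu \in M(L)$. Then $\mu = \gamma - \alpha \cdot \mathrm{id}_L \in \Gamma$, so $\mu \in \Gamma \cap M(L)$. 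By Lemma \ref{centroid}, $\mu$ is algebraic over $F$, and therefore so is $\gamma = \alpha + \mu$.

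The main technical point to be careful with is the application of Jacobson density, since $M(L)$ need not contain the identity; working with $\widetilde{M}(L) = M(L) + F \cdot \mathrm{id}_L$ sidesteps this without changing the centralizer. Everything else is a fairly direct unwinding of definitions, and the payoff is that Lemma \ref{centroid} applies to the right element once finite dimensionality over $\Gamma$ has collapsed the density statement into surjectivity onto $\mathrm{End}_\Gamma(L)$.
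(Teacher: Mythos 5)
Your argument is correct, and it reaches the conclusion by a genuinely different (and leaner) route than the paper. Both proofs hinge on the same two facts: $L$ is a simple $M(L)$-module with $\mathrm{End}_{M(L)}(L)=\Gamma$, and Jacobson density plus $\dim_\Gamma L<\infty$ collapses density into the equality $M(L)=\mathrm{End}_\Gamma(L)$ (your unitalization $\widetilde{M}(L)$ is a harmless precaution here). Where you diverge is in how an arbitrary $\gamma\in\Gamma$ gets placed inside $\Gamma\cap M(L)$ so that Lemma \ref{centroid} can be applied: you observe that $\gamma$, being $\Gamma$-linear (as $\Gamma$ is commutative), already lies in $\mathrm{End}_\Gamma(L)=\widetilde{M}(L)$, so $\gamma-\alpha\cdot\mathrm{id}\in\Gamma\cap M(L)$ and Lemma \ref{centroid} finishes immediately. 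The paper instead invokes the Formanek--Razmyslov central polynomial $c_d$ for $M_d(\Gamma)$ to manufacture one explicit nonzero $c\in\Gamma\cap M(L)$, shows $c\psi\in\Gamma\cap M(L)$ for arbitrary $\psi\in\Gamma$ by absorbing $\psi$ into one of the $\mathrm{ad}$'s via the centroid property $ad(\psi a)=\psi\, ad(a)$, and then deduces that $\psi$ is algebraic because $c$ and $c\psi$ are. Your version buys simplicity: no central polynomials are needed, and the finite-dimensionality hypothesis is used exactly once. The paper's version buys robustness: it never needs to know that a given element of $\Gamma$ itself lies in $M(L)$, only that some nonzero value of a central polynomial does, which is the kind of argument that survives in settings where one has only density (or only a PI/primeness hypothesis) rather than the full equality $M(L)=\mathrm{End}_\Gamma(L)$. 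As written, with the equality in hand, both proofs are complete; yours is the more economical of the two.
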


\begin{proof} Let $R = $ End$_{\Gamma}(L) \cong M_d(\Gamma)$, where $d$ is the dimension of $L$ over $\Gamma$. By Formanek and Razmyslov (see \cite{For}, \cite{Raz}), there is a central polynomial $c_d \in M_d(\Gamma)$. That is, $c_d$ is a nonzero polynomial such that $c_d(a_1, \dots, a_n) \in \Gamma$ for all $a_i \in M_d(\Gamma)$. 

\bigskip

\noindent By the Jacobson Density Theorem (see \cite{J1}), $R =$ End$_{\Gamma}(L) = M(L)$, that is, End$_{\Gamma}(L)$ is generated by elements from $\{ad(a) | a \in L\}$. Since $c_d \ne 0$, there must be some $w_1, \dots, w_n \in M(L)$ such that $c := c_d(w_1, \dots, w_n) \ne 0$. Note that $c \in \Gamma$.

\bigskip

\noindent By lemma \ref{centroid}, $c$ is algebraic over $F$. Let $\psi$ be an arbitrary element of $\Gamma$. Then $c\psi \in \Gamma$ and $c\psi \in M(L)$ as well, since if $c = \sum_{i=1}^{l} \alpha_i ad(a_{i, 1}) \dots ad(a_{i,n_i})$, then $c \psi = \sum_{i=1}^{l} \alpha_i ad(\psi a_{i, 1}) \dots ad(a_{i,n_i})$. Therefore, by lemma \ref{centroid}, $c \psi$ is algebraic over $F$. Therefore, since $c$ and $c \psi$ are algebraic, $\psi$ must be algebraic as well. Thus $\Gamma$ is an algebraic field extension of $F$.
\end{proof}

\noindent As a corollary, if $F$ is algebraically closed, then $L$ must be central.

\bigskip



\noindent The next lemma proves that if $F$ is algebraically closed of zero characteristic, then $Z(R) \cap [R^{(-)}, R^{(-)}] = (0)$ and so $[R^{(-)}, R^{(-)}]$ is simple. Suppose $A = R \oplus R^{op}$ with involution $(a, b)^* = (b, a)$. Then $A$ is involution simple and $[K(A, *), K(A, *)] \cong [R^{(-)}, R^{(-)}]$. This explains why one need only consider $[K(A, *), K(A, *)]$ for $A$ an involution simple, locally finite associative algebra in \cite{BBZ} when the ground field has zero characteristic. 

\begin{lemma}\label{char0}
Suppose $R$ is a simple, locally finite Lie algebra over an algebraically closed field $F$ of characteristic zero. Then $Z(R) \cap [R^{(-)}, R^{(-)}] = (0)$.
\end{lemma}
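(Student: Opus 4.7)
The statement concerns $Z(R)$ and $R^{(-)}$, so I read the hypothesis as ``$R$ is a simple, locally finite \emph{associative} algebra over an algebraically closed field $F$ of characteristic zero,'' since $R^{(-)}$ is defined in the excerpt only for an associative $R$. The plan is to pass to a finitely generated subalgebra of $R$, apply the Wedderburn principal theorem, and combine a trace calculation in matrix blocks with simplicity of $R$.

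Fix $z \in Z(R) \cap [R^{(-)}, R^{(-)}]$ and write $z = \sum_{i=1}^{n} [a_i, b_i]$. Let $A$ be the associative subalgebra of $R$ generated by $\{z, a_1, b_1, \ldots, a_n, b_n\}$; by local finiteness $A$ is finite dimensional. Apply Wedderburn's principal theorem to decompose $A = S \oplus J$ with $J = \mathrm{rad}(A)$ nilpotent and $S$ semisimple; since $F$ is algebraically closed, $S \cong \bigoplus_i M_{k_i}(F)$. Two observations follow at once: $z \in Z(A)$ because $z$ centralizes $R \supseteq A$, and $z \in [A, A]$ by the chosen expression. Hence the algebra projection $\pi : A \twoheadrightarrow S$ sends $z$ into $Z(S) \cap [S,S] = \bigoplus_i \bigl(F \cdot I_{k_i} \cap \mathfrak{sl}_{k_i}(F)\bigr)$. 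In characteristic zero $\mathrm{tr}(\lambda I_{k_i}) = k_i \lambda$ vanishes only for $\lambda = 0$, so this intersection is trivial. Therefore $\pi(z) = 0$, i.e., $z \in J$ and $z^m = 0$ for some $m \ge 1$.

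To conclude, let $I$ be the two-sided ideal of $R$ generated by $z$, and let $R^1$ denote a unitization of $R$. Since $z$ is central we have $zR^1 = R^1 z$, hence $I \subseteq R^1 z R^1$ and $(R^1 z R^1)^m = R^1 z^m R^1 = 0$. Thus $I$ is a nilpotent ideal of the simple algebra $R$, which forces $I = 0$ and hence $z = 0$.

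The step requiring the most care is the last one: neither ``$z$ central'' nor ``$z$ nilpotent'' alone forces $z=0$ in a simple algebra, but together they do, because centrality is exactly what lets one collect factors of $z$ and conclude that the ideal generated by $z$ is itself nilpotent. Characteristic zero enters the argument only through the trace computation in each block $M_{k_i}(F)$, which is also where the analogous statement would break down in small positive characteristic.
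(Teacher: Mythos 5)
Your proof is correct (and your reading of the hypothesis as ``associative algebra'' matches the paper's intent --- the word ``Lie'' in the statement is a slip), and it follows essentially the same route as the paper: reduce to a finite-dimensional subalgebra containing the element and the terms of its commutator expression, split off the radical via the Wedderburn principal theorem, and show the image in the semisimple part lies in $Z(S)\cap[S,S]=(0)$ in characteristic zero, so the element is nilpotent. The only difference is in the last step, where you use centrality to see that the ideal generated by $z$ is itself nilpotent, while the paper shows the ideal is locally nilpotent (by running the same radical argument in every larger finite-dimensional subalgebra) and then invokes Lemma \ref{loc nilp}; your version is a slight streamlining of the same idea.
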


\begin{proof}
Suppose $a \in Z(R) \cap [R^{(-)}, R^{(-)}]$. We wish to show that the ideal in $R$ generated by $a$, denoted $id_{R}(a)$, is locally nilpotent. We can write $a = \sum_i [a_i, b_i]$ for $a_i, b_i \in R$. Let $R_1$ be the finite dimensional subalgebra generated by $\{ a_i, b_i\}$ and suppose $R_2$ is a finite dimensional subalgebra containing $R_1$. Then $a \in Z(R_2) \cap [R_2^{(-)}, R_2^{(-)}]$. Since every element of $R$ is contained in such a subalgebra, it suffices to show that $id_{R_2}(a)$ is nilpotent.

\bigskip

\noindent By lemma \ref{levi}, $R_2 = B_2 \oplus N_2$ where $N_2$ is a nilpotent ideal and $B_2 \cong \overline{R_2}$ is semisimple. Consider $\overline{a} \in \overline{R_2}$. Then $\overline{a} \in Z(\overline{R_2}) \cap [\overline{R_2^{(-)}}, \overline{R_2^{(-)}}]$ and in particular, $\overline{a} \in Z([\overline{R_2^{(-)}}, \overline{R_2^{(-)}}])$. Since $\overline{R_2} \cong B_2$ is semisimple, $\overline{R_2} \cong \bigoplus_{i \in I} M_{n_i}(F)$ and $[\overline{R_2^{(-)}}, \overline{R_2^{(-)}}]\cong \bigoplus_{i \in I} [M_{n_i}(F)^{(-)}, M_{n_i}(F)^{(-)}]$. Since each Lie algebra $[M_{n_i}(F)^{(-)}, M_{n_i}(F)^{(-)}]$ is simple, $Z([\overline{R_2^{(-)}}, \overline{R_2^{(-)}}]) = (0)$. Thus $\overline{a} = 0$ and $a \in N_2$ so $id_{R_2}(a)$ is nilpotent. 

\bigskip

\noindent This implies that $id_{R}(a)$ is locally nilpotent. By lemma \ref{loc nilp}, $a$ cannot be nonzero. Thus $Z(R) \cap [R^{(-)}, R^{(-)}] = (0)$.
\end{proof}


\noindent Lastly, the following lemma will be used in the proof of Theorem 1.

\begin{lemma}\label{pi}
Let $A$ be a simple, locally finite associative algebra and let $\alpha \subseteq A$ be a finite subset. Let $A_{\alpha}$ denote the finite dimensional subalgebra generated by $\alpha$ and let $N_{\alpha}$ be its radical. If $f$ is a polynomial identity such that $f(A_{\alpha}/N_{\alpha}) = (0)$ for every finite subset $\alpha \subseteq A$, then $f(A) = (0)$ as well.
\end{lemma}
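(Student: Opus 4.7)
The plan is to show that the two-sided ideal $J$ of $A$ generated by all values $f(a_1,\dots,a_n)$ ($a_i \in A$) is locally nilpotent, and then invoke simplicity together with Lemma \ref{loc nilp} to force $J=0$, which is precisely the statement $f(A)=(0)$.

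First, I would record the following pointwise version of the hypothesis: for any $a_1,\dots,a_n \in A$, taking $\alpha = \{a_1,\dots,a_n\}$, the assumption $f(A_\alpha/N_\alpha)=(0)$ means $f(a_1,\dots,a_n) \in N_\alpha$. More generally, a typical element of $J$ has the form $j = \sum_i b_i\, f(a_{i,1},\dots,a_{i,n})\, c_i$ (plus analogous terms without a left or right factor if $A$ is not unital); enlarging $\alpha$ to be the finite set of all $b_i$, $c_i$, and $a_{i,k}$ that appear, every inner value $f(a_{i,1},\dots,a_{i,n})$ lies in $N_\alpha$, and since $N_\alpha$ is a two-sided ideal of $A_\alpha$, so does the whole sum $j$.

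Next I would use this to show $J$ is locally nilpotent. Given any finitely many elements $j_1,\dots,j_m \in J$, write each $j_\ell$ as a sum of sandwiched values of $f$ and let $\alpha$ be the union of the finite sets of elements of $A$ that appear in those expressions. This $\alpha$ is finite, $A_\alpha$ is finite dimensional, and by the previous paragraph each $j_\ell$ lies in $N_\alpha$. Therefore the subalgebra of $A$ generated by $\{j_1,\dots,j_m\}$ sits inside the nilpotent ideal $N_\alpha$ and is itself nilpotent. Hence $J$ is locally nilpotent. Since $A$ is simple, $J$ is either $(0)$ or all of $A$; the latter would make $A$ locally nilpotent, contradicting Lemma \ref{loc nilp}. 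So $J=(0)$, and in particular $f(A)=(0)$.

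The only real obstacle is careful bookkeeping: arranging the finite set $\alpha$ so that a single radical $N_\alpha$ simultaneously witnesses that every $j_\ell$ is radical and that their generated subalgebra stays inside that radical. Conceptually the argument is just the observation that the polynomial identity, applied locally inside each Wedderburn quotient, traps every value of $f$ inside a finite-dimensional nilpotent ideal, and local nilpotence is incompatible with simplicity.
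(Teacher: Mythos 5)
Your proof is correct and takes essentially the same approach as the paper's: both trap finitely many elements of the ideal generated by values of $f$ inside a single radical $N_{\alpha}$ (by enlarging $\alpha$ to contain all the sandwiching elements), conclude that the ideal is locally nilpotent, and finish by simplicity together with Lemma \ref{loc nilp}. The only cosmetic difference is that the paper argues by contradiction from the ideal generated by a single nonzero value of $f$, whereas you work with the ideal generated by all values.
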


\begin{proof}
Let $d$ be the degree of $f$. Suppose to the contrary that $f(a_1, \dots, a_d) \ne 0$ for some set of $a_i \in A$. Let $I$ be the nonzero ideal generated by $f(a_1, \dots, a_d)$. Since $A$ is simple, $I = A$. 

\bigskip

\noindent Choose $x_1, \dots, x_m \in I$. By the definition of $I$, we can write $x_i = \sum_j y_{i,j}f(a_1, \dots, a_d)z_{i,j}$. Let $\alpha$ be the finite set $\{ y_{i,j}, z_{i,j}, a_1, \dots, a_d\}$. By assumption, $f(A_{\alpha}) \subseteq N_{\alpha}$. Since $N_{\alpha}$ is an ideal of $A_{\alpha}$, we have that $x_i \in N_{\alpha}$ for every $i$. Thus, the subalgebra generated by $x_1, \dots, x_m$ is nilpotent, which shows that $I$ is locally nilpotent.

\bigskip

\noindent Thus $I = A$ is locally nilpotent. But this contradicts lemma \ref{loc nilp}. Therefore, we must have that $f(a_1, \dots, a_d) = 0$ for all $a_i \in A$.
\end{proof}

\bigskip

\subsection{\large \textbf{Jordan elements in a Lie algebra}}\label{jordanelements}

\noindent We adopt the notation used in \cite{FGG}. A nonzero element $x$ of a Lie algebra $L$ is a \textit{Jordan element} if $ad(x)^3 = 0$. First, by a theorem of Kostrikin (\cite{K}), a non-zero element which is ad-nilpotent gives rise to a non-zero Jordan element.  

\begin{lemma}[Kostrikin's Descent Lemma]\label{kostrikin}
Suppose $L$ is a Lie algebra over $F$ of characteristic $p \ge 5$ and let $a$ be a nonzero element of $L$ such that $ad(a)^n = 0$ for $4 \le n \le p-1$. 

\smallskip

\noindent Then for every $b \in L$, 
$$
(ad [b, \underbrace{a, \dots, a}_
{\mbox{n-1}}])^{n-1} = 0
$$
\end{lemma}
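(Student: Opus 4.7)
The plan is to use the truncated inner automorphism $E_s := \sum_{k=0}^{n-1} \frac{s^k}{k!}\mathrm{ad}(a)^k$, viewed as a polynomial in a formal variable $s$ with coefficients in $\mathrm{End}_F(L)$, and extract the desired identity by comparing two expansions of $\mathrm{ad}(E_s(b))^{n-1}$ in $s$. The hypothesis $4 \le n \le p-1$ is exactly what is needed to make this gadget work: $1/k!$ exists for $k \le n-1$, and the nilpotence index $n$ is small enough that $E_s$ is a bona fide Lie algebra automorphism of $L \otimes_F F[s]$, while $n$ is large enough that a certain degree comparison in $s$ forces the desired vanishing.

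Let $T = \mathrm{ad}(a)$. Because $T$ is a derivation with $T^n = 0$, a direct Leibniz computation shows $T^{2n-1}[x,y] = 0$ and hence the formal identity $\exp(sT)[x,y] = [\exp(sT)x,\exp(sT)y]$ actually closes up at the truncation level $n-1$, so $E_s$ is an automorphism with inverse $E_{-s}$. Since $E_s$ is an automorphism, $\mathrm{ad}(E_s(b)) = E_s \circ \mathrm{ad}(b) \circ E_{-s}$, and therefore
\[
\mathrm{ad}(E_s(b))^{n-1} \;=\; E_s \circ \mathrm{ad}(b)^{n-1} \circ E_{-s},
\]
which is a polynomial in $s$ of degree at most $2(n-1)$. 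On the other hand,
\[
\mathrm{ad}(E_s(b))^{n-1} \;=\; \Bigl(\sum_{k=0}^{n-1}\tfrac{s^k}{k!}\,\mathrm{ad}(T^k(b))\Bigr)^{n-1},
\]
a polynomial in $s$ of degree at most $(n-1)^2$, whose top coefficient (the $s^{(n-1)^2}$-term) is $\frac{1}{((n-1)!)^{n-1}}\,\mathrm{ad}(T^{n-1}(b))^{n-1}$. For $n \ge 4$, we have $(n-1)^2 > 2(n-1)$, so comparing coefficients forces this top term to vanish, and invertibility of $(n-1)!$ in $F$ then yields $\mathrm{ad}(T^{n-1}(b))^{n-1} = 0$. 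Since $T^{n-1}(b)$ differs from $[b,\underbrace{a,\dots,a}_{n-1}]$ only by a sign, this is the claim.

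The main conceptual obstacle is just verifying that the truncated exponential $E_s$ really is an automorphism in positive characteristic; once that is in hand, everything reduces to the degree bookkeeping $(n-1)^2 > 2(n-1)$ for $n \ge 4$. Both of the arithmetic constraints $n \le p-1$ (so $(n-1)!$ is a unit and enough coefficients $1/k!$ exist) and $n \ge 4$ (so the polynomial degree comparison is strict) are used in an essential way, matching the hypothesis of the lemma exactly.
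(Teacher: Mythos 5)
The paper does not actually prove this lemma---it is quoted from Kostrikin \cite{K}---so the only question is whether your argument stands on its own, and it has a genuine gap at its foundation: the claim that $E_s=\sum_{k=0}^{n-1}\frac{s^k}{k!}\,\mathrm{ad}(a)^k$ is an automorphism. Writing $T=\mathrm{ad}(a)$, one computes
\[
[E_sx,E_sy]-E_s[x,y]\;=\;\sum_{m=n}^{2n-2}s^m\sum_{\substack{i+j=m\\ 0\le i,j\le n-1}}\frac{1}{i!\,j!}\,[T^ix,T^jy].
\]
For $n\le m\le p-1$ the inner sum equals $\frac{1}{m!}\,T^m[x,y]=0$, so those degrees are fine. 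But for $p\le m\le 2n-2$ one can no longer write $\frac{1}{i!\,j!}=\frac{1}{m!}\binom{m}{i}$, and the inner sum---whose coefficients $\frac{1}{i!\,j!}$ are individually well defined and nonzero, since $i,j\le n-1\le p-2$---has no reason to vanish. Knowing $T^{2n-1}[x,y]=0$ does not help: that identity is the vanishing of $\sum_{i+j=m}\binom{m}{i}[T^ix,T^jy]$, which is \emph{not} a unit multiple of the sum you need once $m\ge p$. So $E_s$ is an automorphism only under the stronger hypothesis $2(n-1)\le p-1$, i.e.\ $n\le (p+1)/2$, and your proof is valid exactly in that range.

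The trouble is that the lemma is claimed, and used in the paper (where the standing hypothesis is $\mathrm{ad}(x)^{p-1}=0$), for $n$ all the way up to $p-1$, where $2n-2$ is nearly $2p$; for $p=11$, say, your argument covers $4\le n\le 6$ while the lemma asserts $4\le n\le 10$. In the uncovered range the conjugation identity $\mathrm{ad}(E_s(b))^{n-1}=E_s\circ\mathrm{ad}(b)^{n-1}\circ E_{-s}$, and with it the degree bound $2(n-1)$, is unjustified. Weakening it to a congruence modulo $s^p$ does not rescue the argument either, since the coefficient you want to kill sits in degree $(n-1)^2$, which is $\ge p$ precisely for the large $n$ at issue. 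This is exactly why the descent lemma is a nontrivial theorem: for $n\le(p+1)/2$ the truncated-exponential argument you give is standard and correct, but descending from $n$ near $p$ requires a genuinely combinatorial analysis of the operators $\sum_i(-1)^i\binom{n-1}{i}T^{\,i}\,\mathrm{ad}(b)\,T^{\,n-1-i}$ as in \cite{K}. Your degree bookkeeping, the identification of the top coefficient, and the invertibility of $(n-1)!$ are all fine; the automorphism step is the missing---and hard---ingredient.
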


\bigskip

\noindent Hence, if there is some $0 \ne y \in L$ such that ad$(y)^{p-1} = 0$, then $L$ contains a nonzero Jordan element, $x$. We first state some useful identities concerning Jordan elements which appear in \cite{K} (see also \cite{FGG}). We will utilize the notation $X := ad_x$, where $ad_x(y) = [x, y]$.

\begin{lemma}\label{identities}
Let $x$ be a Jordan element in a Lie algebra $L$ and let $a$ and $b$ be arbitrary elements of $L$. 
\begin{enumerate}
\item[i.] $X^2AX = XAX^2$
\item[ii.] $X^2AX^2 = 0$
\item[iii.] $X^2A^2XAX^2 = X^2AXA^2X^2$
\item[iv.] $[X^2(a), X(b)] = - [X(a), X^2(b)]$
\item[v.] $ad_x^2[[a,x],b] = ad_x^2[[b,x],a]$
\item[vi.] $X^2ad_{[a,X^2(b)]} = ad_{[X^2(a), b]}X^2$
\item[vii.] $ad^2_{X^2(a)}=X^2A^2X^2$
\item[viii.] $ad_x^2(a)$ is a Jordan element.
\end{enumerate}
\end{lemma}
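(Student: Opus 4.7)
The plan is to verify each identity by direct manipulation using the Jacobi identity together with the defining relation $X^3 = 0$. The identities are arranged so that most later ones follow easily from earlier ones, with the bulk of the work concentrated in identity (vii).

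First, for (i): applying $X^2AX - XAX^2$ to an arbitrary $y \in L$ and rearranging via Jacobi, the difference reduces to $[x, [[x,a],[x,y]]]$, and a second Jacobi expansion reduces this further to terms all containing $X^3$, hence vanishing. Then (ii) is immediate: $X^2AX^2 = (X^2AX)X = (XAX^2)X = XAX^3 = 0$, using (i). Identity (iv) follows by applying $X^3$ to $[a,b]$ via the Leibniz rule for the derivation $\operatorname{ad}_x$: the expansion gives $3[X^2(a), X(b)] + 3[X(a), X^2(b)] = 0$, and since $p \geq 5$ the factor of $3$ is invertible. Identity (v) then drops out by expanding $X^2[X(a),b]$ and $X^2[X(b),a]$ via Leibniz and invoking (iv). Identity (iii) is obtained from (i) and (ii) by shuffling copies of $X$ past $A$ and discarding any $X^3$ that appears.

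The main obstacle is identity (vii), $ad_{X^2(a)}^2 = X^2A^2X^2$. This is the core Jordan-element relation of Kostrikin, and its proof is an extended Jacobi calculation: expand $[X^2(a),[X^2(a),y]]$ and repeatedly apply Jacobi to pull the outer $X$'s outside the inner brackets, using (i), (ii), and (iv) at each step to annihilate cross-terms. Identity (vi) can then be derived either by polarizing (vii) via the substitution $a \mapsto a + \lambda b$ and reading off the linear coefficient, or by a direct Jacobi computation paralleling the proof of (vii).

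Finally, for (viii), I would first show $[X^2(a), X^2(z)] = 0$ for every $z \in L$. Writing $X[X^2(a), X(z)] = [X^2(a), X^2(z)]$ (since $X^3 = 0$), and using (iv) to replace $[X^2(a), X(z)]$ by $-[X(a), X^2(z)]$, then computing $X[X(a), X^2(z)] = [X^2(a), X^2(z)]$ by Leibniz, one obtains $[X^2(a), X^2(z)] = -[X^2(a), X^2(z)]$; since $p \neq 2$, this forces the bracket to vanish. Combining this with (vii), $(ad_{X^2(a)})^3 = ad_{X^2(a)} \circ X^2 A^2 X^2$ annihilates every $y \in L$, so $X^2(a)$ is a Jordan element. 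The only genuine difficulty in the whole lemma is the combinatorial bookkeeping behind (vii); all other parts are routine consequences.
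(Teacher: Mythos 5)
The paper offers no proof of this lemma at all: it records the identities as results of Kostrikin \cite{K} (see also \cite{FGG}), so any actual verification is a departure from the text. Your plan of deriving everything by direct operator computation from $X^3=0$ is the standard one and does work. In particular your treatment of (viii) (reduce to $[X^2(a),X^2(z)]=0$, then compose with (vii)) is correct, and (vii) is in fact the least troublesome part: writing $ad_{X^2(a)}=X^2A-2XAX+AX^2$ and squaring, every term except $X^2A^2X^2$ dies using only $X^3=0$, (i) and (ii), with essentially no bookkeeping.

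Three steps need repair. (1) For (i), the difference $(X^2AX-XAX^2)(y)=[x,[[x,a],[x,y]]]$ does not consist of ``terms containing $X^3$''; it equals $[X^2(a),X(y)]+[X(a),X^2(y)]=\tfrac{1}{3}X^3([a,y])$, so (i) is literally equivalent to (iv) and already costs a division by $3$. Since the lemma as stated carries no characteristic hypothesis, you must make $1/6\in F$ a standing assumption (harmless for this paper, which only invokes the lemma when $p>3$). (2) Identity (iii) is not obtained by shuffling letters past one another and discarding $X^3$: the honest derivation applies (ii) to the element $[a,[a,[x,a]]]$, giving $0=X^2A^2DX^2-2X^2ADAX^2+X^2DA^2X^2$ with $D=ad_{[x,a]}=XA-AX$, which expands to $3\left(X^2A^2XAX^2-X^2AXA^2X^2\right)=0$ --- another division by $3$ that your sketch hides. (3) Polarizing (vii) at $a\mapsto a+\lambda b$ yields the \emph{symmetric} relation $ad_{X^2(a)}ad_{X^2(b)}+ad_{X^2(b)}ad_{X^2(a)}=X^2(ad_a\,ad_b+ad_b\,ad_a)X^2$, which is not (vi); identity (vi) is asymmetric and must be checked directly (it does follow from (i) and (ii) once both sides are expanded using $ad_{X^2(c)}=X^2ad_c-2X\,ad_c\,X+ad_c\,X^2$), so drop the polarization route and keep only your fallback computation.
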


\bigskip

\noindent We now use a Jordan element to create a Jordan algebra associated to $L$ as is done in \cite{FGG}. Recall that a \textit{Jordan algebra}, $J$, is an algebra over a field $F$ with a commutative binary product $\circ$ satisfying the Jordan identity: 
$x^2 \circ (y \circ x) = (x^2 \circ y) \circ x$ for each $x, y \in J$. This product is in general nonassociative. The \textit{triple product} in a Jordan algebra is $\{ x, y, z\} = (x \circ y) \circ z + x \circ (y \circ z) - y \circ (x \circ z)$ and the \textit{U-operator} is the quadratic map $J \rightarrow End_F(J)$ given by $x \mapsto U_x$ where $U_x(y) = \{ x, y, x\}$.

\begin{lemma}\label{jordan}
Let $x$ be a Jordan element of a Lie algebra $L$. Define a new multiplication on $L$ via $a \bullet b : = [[a, x], b]$ and denote this nonassociative algebra by $L^{(x)}$.

\begin{itemize}
\item $ker_L(x) = \{a \in L | [x, [x, a]] = 0 \}$ is an ideal of $L^{(x)}$.
\item $L_x :=  L^{(x)} / ker_L(x)$ is a Jordan algebra with $U$-operator:
\begin{eqnarray*}
U_{\overline{a}}\overline{b} = \overline{ad_a^2ad_x^2b}
\end{eqnarray*}
\end{itemize}
\end{lemma}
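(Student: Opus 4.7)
My plan is to verify in turn that $\ker_L(x)$ is an ideal of $L^{(x)}$, that $L_x$ is commutative and satisfies the Jordan axiom, and that its $U$-operator takes the stated form. The machinery throughout is the catalogue of identities in Lemma \ref{identities}, which encode the hidden symmetries of the bullet product; the proof is really a matter of feeding the right identity to the right step.

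The first two steps are short. For the ideal property, I would apply $X^2$ to $a \bullet b = [[a,x],b]$ and to $b \bullet a = [[b,x],a]$, expanding by the Leibniz rule for $X^2$ and discarding any term in which $X^3$ lands on a factor; the hypothesis $X^2(a)=0$ then kills the remaining terms via the antisymmetry $[X^2(a), X(b)] = -[X(a), X^2(b)]$ of identity (iv). For commutativity modulo $\ker_L(x)$, Jacobi collapses $a \bullet b - b \bullet a$ to $X[a,b]$, and $X^3=0$ puts this element in $\ker_L(x)$.

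The Jordan identity and the $U$-operator formula are the substantive part. Writing $a \bullet b = [b, X(a)]$ directly, I would expand $\bar a \bullet (\bar a \bullet \bar b)$ and $\bar{a}^2 \bullet \bar b$ as iterated Lie brackets so that $U_{\bar a}\bar b = 2\,\bar a \bullet (\bar a \bullet \bar b) - \bar{a}^2 \bullet \bar b$ becomes an explicit polynomial expression in $X$ and $A = ad_a$ applied to $b$. Identity (vii), $ad^2_{X^2(a)} = X^2 A^2 X^2$, together with identity (i), $X^2 A X = X A X^2$, which lets one shuffle orders of $X$ and $A$ freely inside such a sandwich, should then collapse this combination to $\overline{A^2 X^2(b)}$. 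For the Jordan axiom $\bar{a}^2 \bullet (\bar b \bullet \bar a) = (\bar{a}^2 \bullet \bar b) \bullet \bar a$ itself, identity (iii), $X^2 A^2 X A X^2 = X^2 A X A^2 X^2$, is the key symmetry: it is precisely the linearized Jordan axiom sandwiched between two copies of $X^2$, which is exactly how it appears after moving to the quotient by $\ker_L(x)$. The main obstacle is bookkeeping rather than insight: each bullet expansion produces a long Jacobi-type expression, many intermediate terms vanish only after one further application of $X^3=0$, and several must be combined via identities (v) and (vi) before the target equalities become visible.
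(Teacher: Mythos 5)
Your plan is sound, and you should know that the paper itself does not prove this lemma: it is imported from \cite{FGG} with no in-text argument, so the only thing to compare against is the list of identities in Lemma \ref{identities}, which is exactly the toolkit you propose. The two short steps check out as you describe: expanding $X^2[X(a),b]$ by the Leibniz rule leaves $2[X^2(a),X(b)]+[X(a),X^2(b)]$, and identity (iv) turns the second term into a multiple of the first, so $X^2(a)=0$ kills everything; and $a\bullet b-b\bullet a=X([b,a])\in ker_L(x)$ by Jacobi and $X^3=0$. For the $U$-operator, writing $A=ad_a$ one computes $2\,ad_{[a,x]}^2-ad_{[a^{\bullet 2},x]}=A^2X^2-2AX^2A-2XA^2X+4XAXA-X^2A^2$, and left multiplication by $X^2$ annihilates every term except $A^2X^2$; the only nonobvious survivor is $X^2AX^2A$, which dies by identity (ii) (equivalently by (i) together with $X^3=0$). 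So the collapse you predict does happen, though the operative identities are (i)/(ii) rather than (vii) -- (vii) is a consequence used elsewhere in the paper, not an input here. For the Jordan axiom, one point needs care: you must first replace $(\overline{a^{\bullet2}}\bullet\bar b)\bullet\bar a$ by $\bar a\bullet(\overline{a^{\bullet2}}\bullet\bar b)$, using commutativity modulo $ker_L(x)$ together with the already-established ideal property, so that both sides become operator expressions applied to $b$; the difference is then $\overline{[P,Q](b)}$ with $P=ad_{[a^{\bullet2},x]}$ and $Q=ad_{[x,a]}$, and a direct computation using (i) and (ii) reduces $X^2[P,Q]$ to exactly $X^2A^2XAX^2-X^2AXA^2X^2$, which vanishes by identity (iii). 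This confirms your identification of (iii) as the crux. The proposal is correct; it reconstructs the \cite{FGG} argument, and the only inaccuracy is attributing the $U$-operator collapse to identity (vii).
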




\bigskip

\begin{lemma}
Suppose $L$ is a locally finite Lie algebra and $x \in L$ is a Jordan element. Then $L_x$ is a locally finite Jordan algebra.
\end{lemma}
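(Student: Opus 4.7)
The plan is to show local finiteness by the standard device of localizing each finite set of elements to a finite-dimensional Lie subalgebra of $L$ that still contains $x$, and then pushing forward the construction of Lemma \ref{jordan}. The key observation is that the Jordan product on $L_x$ is defined by the formula $a \bullet b = [[a,x], b]$, which only uses the ambient Lie bracket together with the fixed element $x$; so it is compatible with taking Lie subalgebras containing $x$.

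More precisely, I would fix finitely many elements $\overline{a}_1, \dots, \overline{a}_n \in L_x$, choose representatives $a_1, \dots, a_n \in L$, and set $L_1 = \mathrm{Lie}(x, a_1, \dots, a_n) \subseteq L$. Local finiteness of $L$ makes $L_1$ finite dimensional. Since $ad(x)^3 = 0$ holds in $L_1$, we may apply Lemma \ref{jordan} inside $L_1$ to form the finite-dimensional Jordan algebra $(L_1)_x = L_1^{(x)} / \ker_{L_1}(x)$.

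Next I would produce a Jordan homomorphism $(L_1)_x \to L_x$ induced by the inclusion $L_1 \hookrightarrow L$. The $\bullet$ product on $L_1^{(x)}$ is the restriction of the $\bullet$ product on $L^{(x)}$, so the inclusion is a homomorphism of nonassociative algebras; and an element $a \in L_1$ satisfies $[x,[x,a]] = 0$ in $L_1$ if and only if it does in $L$, so $\ker_{L_1}(x) = L_1 \cap \ker_L(x)$ and the inclusion descends to a well-defined (in fact injective) map on the quotients. Its image is a finite-dimensional Jordan subalgebra of $L_x$ containing $\overline{a}_1, \dots, \overline{a}_n$, and therefore contains the Jordan subalgebra they generate. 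This forces that subalgebra to be finite dimensional, so $L_x$ is locally finite.

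There is no real obstacle here: the only mildly delicate point is checking that $\ker_{L_1}(x) = L_1 \cap \ker_L(x)$ so that the map descends to the quotient, but this is immediate because both conditions are $[x,[x,a]] = 0$ computed using the same bracket. Everything else is formal transport of the Jordan structure along an inclusion.
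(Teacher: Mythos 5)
Your proof is correct and takes essentially the same approach as the paper: both rest on the observation that the $\bullet$-product is computed by Lie brackets involving $x$, so everything generated by $\overline{a}_1,\dots,\overline{a}_n$ lives inside the finite-dimensional Lie algebra $\mathrm{Lie}(x,a_1,\dots,a_n)$. The paper simply shows $L^{(x)}$ itself is locally finite and lets the quotient inherit this, whereas you carry the quotient by $\ker_L(x)$ through explicitly; the extra care about $\ker_{L_1}(x)=L_1\cap\ker_L(x)$ is fine but not needed.
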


\begin{proof}
It suffices to show that the algebra $L^{(x)}$ with multiplication $a \bullet b : =  [[a, x], b]$ is locally finite. Suppose $a_1, \dots, a_n$ is a finite subset of $L$. By the definition of $\bullet$, the algebra $<a_1, \dots, a_n>$ is contained in the Lie algebra $Lie(a_1, \dots, a_n, x)$ which is finite dimensional.
\end{proof}

\bigskip

\noindent The remaining results in this section can be found in \cite{DFGG}.

\begin{definition}
A nonzero element $e$ in a Lie algebra is called \textup{von Neumann regular} if:
\begin{itemize}
\item $ad_e^3 = 0$, and
\item $e \in ad_e^2(L)$
\end{itemize} 
\end{definition}

\noindent We say that a pair of elements $(e,f)$ is an \textit{idempotent} in a Lie algebra if $ad_e^3 = ad_f^3 = 0$ and $(e, [e,f], f)$ is an $\mathfrak{sl}_2$ triple.

\bigskip

\noindent The following lemma appears as Proposition 2.9 in \cite{DFGG}.

\begin{lemma}
Suppose $e \in L$ is regular.
\begin{itemize}
\item[i.] For every $h \in [e,L]$ such that $[h,e] = 2e$, there exists $f \in L$ such that $[e,f] = h$ and $(e,f)$ is an idempotent.
\item[ii.] Let $(e,f)$ be an idempotent and $h := [e,f]$. Then $ad_h$ is semisimple and its action induces a finite $\mathbb{Z}-$grading on $L$:
\begin{eqnarray*}
L = L_{-2} + L_{-1} + L_0 + L_1 + L_2
\end{eqnarray*}
\end{itemize}
\end{lemma}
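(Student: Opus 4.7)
For part (i), the plan is a Jacobson--Morozov style construction adapted to the locally finite setting. Since $h \in [e, L]$, I would write $h = [e, y_0]$ for some $y_0 \in L$ and set $z := [h, y_0] + 2y_0$. A direct computation using $[h, e] = 2e$ gives $[e, z] = [[e,h], y_0] + [h, [e,y_0]] + 2[e,y_0] = -2h + [h, h] + 2h = 0$, so $z \in \ker(ad_e)$. If I can find $u \in \ker(ad_e)$ with $(ad_h + 2) u = -z$, then $f := y_0 + u$ automatically satisfies $[e, f] = h$ and $[h, f] = -2f$. To solve this inside $L$, I would restrict to the finite dimensional Lie subalgebra $\mathfrak{g}_0 \subseteq L$ generated by $e$, $h$, and $y_0$, on which $ad_e$ is nilpotent of index at most $3$. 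Decomposing $\mathfrak{g}_0$ into generalized $ad_h$-eigenspaces, the relation $[h,e]=2e$ together with nilpotency of $ad_e$ pin $z$ to a generalized weight subspace of $\ker(ad_e)$ on which $ad_h + 2$ is invertible, yielding the desired $u$.

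For the remaining condition $ad_f^3 = 0$, as well as all of part (ii), I would invoke $\mathfrak{sl}_2$-representation theory. Once the relations $[e, f] = h$, $[h, e] = 2e$, and $[h, f] = -2f$ are in place, the triple $(e, h, f)$ generates a copy of $\mathfrak{sl}_2$ acting on $L$. Every finite dimensional $\mathfrak{sl}_2$-invariant subspace $V \subseteq L$, and such subspaces cover $L$ by local finiteness, satisfies $e^3 V = 0$; hence every irreducible composition factor of $V$ has dimension at most $3$, since an $\mathfrak{sl}_2$-irreducible of dimension $n \le p$ has $e$ acting with nilpotency index exactly $n$, while non-restricted irreducibles would require $e^p \ne 0$. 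Because these dimensions lie strictly below $p$, the classical complete reducibility theorem for $\mathfrak{sl}_2$-modules in characteristic $p$ applies, and $V$ decomposes as a direct sum of restricted irreducibles of highest weights $0$, $1$, or $2$. On each summand $f$ acts as the standard lowering operator, so $ad_f^3 = 0$ globally, completing part (i); and $ad_h$ is diagonalizable with eigenvalues in $\{-2,-1,0,1,2\}$, giving the five-term grading in part (ii).

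The hardest step is complete reducibility of $\mathfrak{sl}_2$-modules, which fails for general modules in positive characteristic. The nilpotency hypothesis $ad_e^3 = 0$ is essential here: it forces irreducible composition factors to sit in the restricted regime (dimensions $\le 3 < p$), where Ext${}^1$ between distinct irreducibles vanishes. A secondary technical point is the solvability of $(ad_h + 2) u = -z$ inside $\ker(ad_e) \cap \mathfrak{g}_0$ in part (i), which requires care because $ad_h$ is not yet known to be semisimple on $\mathfrak{g}_0$; however, nilpotency of $ad_e$ constrains the generalized weight decomposition of $\mathfrak{g}_0$ enough to guarantee the inversion.
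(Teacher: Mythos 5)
The paper offers no proof of this lemma --- it is quoted verbatim as Proposition 2.9 of \cite{DFGG} --- so your argument can only be measured against the standard proofs in the literature. Your reduction in part (i) is correct as far as it goes: $z=[h,y_0]+2y_0$ does lie in $\ker(ad_e)$, and $f=y_0+u$ works once $u\in\ker(ad_e)$ solves $(ad_h+2)u=-z$. The genuine gap is the solvability of that equation, and the justification you offer (``nilpotency of $ad_e$ pins $z$ to a generalized weight subspace of $\ker(ad_e)$ on which $ad_h+2$ is invertible'') points in the wrong direction. Let $V_\lambda$ denote the generalized $ad_h$-eigenspaces of your finite dimensional subalgebra $\mathfrak{g}_0$. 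Since $[ad_h,ad_e]=2\,ad_e$ gives $ad_e V_\lambda\subseteq V_{\lambda+2}$ and $h\in V_0$, you may replace $y_0$ by its component in $V_{-2}$ without changing $h=[e,y_0]$; but then $z=(ad_h+2)y_0$ lies entirely in $V_{-2}$, which is precisely the generalized eigenspace on which $ad_h+2$ is \emph{nilpotent} rather than invertible. What actually has to be proved is that $\ker(ad_e)\cap V_{-2}=(0)$, forcing $z=0$; this is the substance of Morozov's lemma, it is nontrivial already in characteristic zero, and in characteristic $p$ it is exactly the step that can fail for a general ad-nilpotent element. Your sketch neither proves it nor uses the one hypothesis that makes it tractable here, namely the von Neumann regularity $e\in ad_e^2(L)$ together with the Jordan-element identities of Lemma \ref{identities}, which is how \cite{DFGG} proceed.

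The $\mathfrak{sl}_2$-theoretic half is plausible in outline but quietly assumes $p\ge 7$. Ruling out $p$-dimensional non-restricted irreducibles on which $e^3=0$ amounts to showing that the degree-six polynomial $k(k-1)(k-2)(\lambda-k+1)(\lambda-k+2)(\lambda-k+3)$ cannot vanish at all relevant values of $k$, which fails for $p=5$ (take $\lambda=2$); likewise $\mathrm{Ext}^1(L(m),L(n))\ne(0)$ for restricted simples exactly when $m+n=p-2$, so nonsplit extensions between $L(1)$ and $L(2)$ exist at $p=5$ and the complete reducibility you rely on is not available there. This is harmless for Theorem 1, where $p>7$, but the lemma is also invoked in the proof of Theorem 2 under the hypothesis $p>3$, and the computational proof in \cite{DFGG} works whenever $6$ is invertible. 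So even after the Morozov gap is repaired, your route establishes the lemma only in a narrower range of characteristics than the paper needs.
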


\bigskip

\subsection{\large \textbf{Locally nondegenerate Lie algebras}}\label{LND}

\noindent A Lie algebra is \textit{nondegenerate} if there are no nonzero elements $x \in L$ such that $ad_x^2 L = 0$. It is well known that if $R$ is a simple ring, then the Lie algebras $[R^{(-)}, R^{(-)}] / (Z(R) \cap [R^{(-)}, R^{(-)}])$ and $[K(R, *), K(R, *)] / (Z(R) \cap [K(R, *), K(R, *)])$ are nondegenerate (see for instance lemmas 5.2 and 5.13 of \cite{DFGG}).







\bigskip

\noindent A Jordan algebra $J$ is \textit{nondegenerate} if there are no nonzero elements $x \in J$ such that $U_x(J) = \{x, J, x\} = (0)$. Nondegeneracy in Jordan algebras is closely related to the notion of an m-sequence. An \textit{m-sequence} in a Jordan algebra $J$ is a sequence $\{ a_n\}$ such that $a_{n+1} = U_{a_n}(b)$ for some $b \in J$. We say an m-sequence has \textit{length} $k$ if $a_k \ne 0$ and $a_{k+1} = 0$, and we say an m-sequence \textit{terminates} if it has finite length. M-sequences characterize the elements of the \textit{McCrimmon radical}, $M(J)$, which is defined to be the smallest ideal of $J$ inducing a nondegenerate quotient. An element $x \in J$ is in the McCrimmon radical $M(J)$ if and only if any m-sequence which begins with $x$ terminates. Thus, a Jordan algebra $J$ is nondegenerate if and only if there are no nonzero elements $x$ such that every m-sequence beginning with $x$ terminates.

\bigskip

\noindent There is also a notion of m-sequence in Lie algebras which is defined in \cite{GG}: an m-sequence in a Lie algebra $L$ is a sequence $\{ a_n\}$ such that $a_{n+1} = [a_n,[a_n, b_n]]$ for some $b_n \in L$. Recall that for a Lie algebra $L$, the \textit{Kostrikin radical} $K(L)$ is the smallest ideal of $L$ inducing a nondegenerate quotient (\cite{Ze}). If any m-sequence beginning with an element $x$ terminates, then $x \in K(L)$. (The other direction is currently unknown and is the subject of \cite{GG}). 

\bigskip 

\noindent We now define an $S$-sequence in a Lie algebra. For any finite set $S$ in a Lie algebra $L$, an \textit{S-sequence} is a sequence $\{x_n \}$ in $L$ such that for each $n$, $x_{n+1} = [x_n, [x_n, s]]$ for some $s \in S$. Similarly, for a finite set $S$ of a Jordan algebra $J$, an $S$-sequence is a sequence $\{ x_n \}$ in $J$ such that for each $n$, $x_{n+1} = U_{x_n}(s)$ for some $s \in S$. We say an S-sequence has \textit{length} $k$ if $x_k \ne 0$ and $x_{k+1} = 0$, and we say an S-sequence \textit{terminates} if it has finite length. 

\bigskip

\noindent A Lie algebra $L$ is \textit{locally nondegenerate} if there are no nonzero elements $x$ such that for any finite set $S$, every $S$-sequence beginning with $x$ terminates. We define the \textit{local Kostrikin radical}, $K_{loc}(L)$, to be the smallest ideal of $L$ which induces a locally nondegenerate quotient. The following lemma is straightforward.

\begin{lemma}\label{locnd}
If $L$ is a locally nondegenerate Lie algebra, then $L$ is nondegenerate. 
\end{lemma}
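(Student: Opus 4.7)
The plan is to prove the contrapositive: if $L$ is degenerate, then $L$ is not locally nondegenerate. This just unwinds the definitions and observes that the $S$-sequence condition is strictly weaker than the single condition $ad_x^2 L = 0$.

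Assume $L$ is degenerate, so there exists a nonzero $x \in L$ with $ad_x^2 L = 0$, that is $[x,[x,y]] = 0$ for every $y \in L$. I would fix this $x$ as the candidate witness for local degeneracy. Let $S \subseteq L$ be an arbitrary finite set, and let $\{x_n\}$ be an arbitrary $S$-sequence with $x_1 = x$. By the recursive definition, $x_2 = [x_1,[x_1,s_1]] = [x,[x,s_1]]$ for some $s_1 \in S$, which is zero by hypothesis on $x$. Hence the sequence satisfies $x_1 \ne 0$ and $x_2 = 0$, so it has length $1$ and in particular terminates.

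Since this argument holds for every finite $S$ and every $S$-sequence starting at $x$, the nonzero element $x$ has the property that every $S$-sequence beginning with $x$ terminates. By the definition in section \ref{LND}, this is precisely the negation of local nondegeneracy, so $L$ is not locally nondegenerate, completing the contrapositive. There is no real obstacle here; the only thing to be careful about is the convention that a length-$1$ sequence (with $x_1 = x \ne 0$ and $x_2 = 0$) counts as terminating, which follows immediately from the definition of ``length $k$'' given in the excerpt.
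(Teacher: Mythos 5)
Your argument is correct: taking the contrapositive and observing that a witness $x$ of degeneracy ($ad_x^2L=0$) forces every $S$-sequence starting at $x$ to die after one step is exactly the intended reasoning, and the paper itself omits the proof as ``straightforward.'' Nothing further is needed.
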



\noindent The following proposition shows that the condition of being locally nondegenerate is trivial when the ground field has characteristic zero.

\begin{proposition}
Let $L$ be a simple, locally finite Lie algebra over a field of characteristic zero. Then $L$ is locally nondegenerate.
\end{proposition}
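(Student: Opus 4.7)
The plan is a proof by contradiction. Suppose $L$ is not locally nondegenerate, so there exists a nonzero $x \in L$ with the property that for every finite $S \subseteq L$, every $S$-sequence beginning at $x$ terminates. The strategy is to show that such an $x$ must lie inside the solvable radical of every finite-dimensional subalgebra of $L$ that contains it, and then use simplicity of $L$ to obtain local solvability, contradicting Lemma~\ref{loc nilp}.

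Fix a finite-dimensional subalgebra $M$ containing $x$ and take its Levi decomposition $M = P \oplus \mathrm{Rad}(M)$, with $P$ semisimple. Write $x = p + r$ with $p \in P$ and $r \in \mathrm{Rad}(M)$; I claim $p = 0$. Since $P$ is a finite-dimensional semisimple Lie algebra in characteristic zero, $P$ is nondegenerate, i.e.\ $ad_q^2 \ne 0$ on $P$ for every nonzero $q \in P$. Let $S$ be a basis of $P$, a finite subset of $L$. Assuming $p \ne 0$, I will construct an $S$-sequence $x_0 = x$, $x_{n+1} = [x_n, [x_n, s_n]]$ whose image $\bar x_n$ in $M/\mathrm{Rad}(M) \cong P$ is nonzero at every step: $\bar x_0 = p \ne 0$, and inductively, if $\bar x_n \ne 0$ then nondegeneracy of $P$ yields some $s_n \in S$ with $ad_{\bar x_n}^2(s_n) \ne 0$ in $P$, whence $\bar x_{n+1} = ad_{\bar x_n}^2(s_n)$ is nonzero and so $x_{n+1} \ne 0$ in $M$. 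This non-terminating $S$-sequence contradicts the assumption on $x$, so $p = 0$ and $x \in \mathrm{Rad}(M)$.

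Now define $\mathrm{Rad}_{\mathrm{loc}}(L) := \{y \in L : y \in \mathrm{Rad}(N) \text{ for every finite-dimensional subalgebra } N \ni y\}$. The key observation for checking this is an ideal is that for finite-dimensional subalgebras $M \subseteq M'$ of $L$, the intersection $M \cap \mathrm{Rad}(M')$ is a solvable ideal of $M$ (an ideal because $\mathrm{Rad}(M')$ is an ideal of $M'$, and solvable as a subalgebra of $\mathrm{Rad}(M')$), hence $M \cap \mathrm{Rad}(M') \subseteq \mathrm{Rad}(M)$. Given $y_1, y_2 \in \mathrm{Rad}_{\mathrm{loc}}(L)$ and a finite-dimensional $M \ni y_1 + y_2$, one enlarges $M$ to $M' = \mathrm{Lie}(M \cup \{y_1, y_2\})$, which is finite-dimensional by local finiteness; since each $y_i \in \mathrm{Rad}(M')$, we also get $y_1 + y_2 \in \mathrm{Rad}(M') \cap M \subseteq \mathrm{Rad}(M)$. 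The same trick handles $[y, z]$ for $y \in \mathrm{Rad}_{\mathrm{loc}}(L)$ and $z \in L$.

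By the second paragraph $x \in \mathrm{Rad}_{\mathrm{loc}}(L)$, so this ideal is nonzero; simplicity of $L$ forces $\mathrm{Rad}_{\mathrm{loc}}(L) = L$. But then any finitely generated subalgebra $N = \mathrm{Lie}(a_1, \dots, a_n)$ of $L$ has each $a_i \in \mathrm{Rad}(N)$, so $N \subseteq \mathrm{Rad}(N)$ is solvable; hence $L$ is locally solvable, contradicting Lemma~\ref{loc nilp}. The main obstacle is the Levi-theoretic construction of the non-terminating $S$-sequence in the second paragraph: a single finite $S$, namely a basis of the semisimple factor $P$, must suffice for the whole sequence, and one has to verify carefully that passage to $M/\mathrm{Rad}(M)$ preserves nonvanishing along the sequence. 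Once that step is in place, the remaining steps are formal manipulations with solvable radicals and the locally finite structure.
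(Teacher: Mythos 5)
Your proof is correct and follows essentially the same route as the paper's: pass to the Levi/radical decomposition of a finite-dimensional subalgebra containing $x$, use nondegeneracy of finite-dimensional semisimple Lie algebras in characteristic zero to build a non-terminating $S$-sequence with $S$ a basis of the semisimple part, conclude $x$ lies in every local solvable radical, and contradict simplicity via local solvability. The only (harmless) difference is bookkeeping: you package the conclusion as the ideal $\mathrm{Rad}_{\mathrm{loc}}(L)$, whereas the paper shows directly that the ideal generated by $x$ is locally solvable.
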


\begin{proof}
Suppose there is some element nonzero $a \in L$ such that for any finite set $S$, any $S$ sequence beginning with $a$ terminates. We will show that the ideal generated by $a$, denoted $id_L(a)$, is nonzero and locally solvable, which will contradict lemma \ref{loc nilp} and complete the proof.

\bigskip

\noindent Suppose $L_1$ is a finite dimensional subalgebra of $L$ containing $a$. It suffices to show that $id_{L_1}(a)$ is solvable. Let $\overline{L_1} = L_1 / Rad(L_1)$. Then $\overline{a}$ is an element of $\overline{L_1}$ such that every $\overline{S}$-sequence beginning with $\overline{a}$ terminates. Therefore, without loss of generality, we may assume $L_1$ is semisimple.

\bigskip

\noindent Thus $L_1$ is a finite dimensional, semisimple Lie algebra over a field of characteristic zero. Thus $L_1$ is nondegenerate, i.e. there are no nonzero elements $x \in L_1$ such that $ad_x^2(L_1) = 0$. Let $S$ be a basis for $L_1$. Since $ad_a^2(L_1) \ne (0)$, there is some element $s_1 \in S$ such that $x_1 = ad_a^2(s_1) \ne 0$. Similarly, $ad^2_{x_1}(L_1) \ne (0)$, so there is some element $s_2 \in S$ such that $x_2 = ad^2_{x_1}(s_2) \ne 0$. Continuing in this way, we can create an $S$-sequence $\{ x_n\}$ which does not terminate for a contradiction. This completes the proof.
\end{proof}

\bigskip

\noindent The proof of the previous lemma also shows that if $L$ is finite dimensional, then $L$ is nondenegerate if and only if $L$ is locally nondegenerate. The next proposition shows that if $R$ is a simple, locally finite associative algebra, then its associated Lie algebras $[R^{(-)}, R^{(-)}] / (Z(R) \cap [R^{(-)}, R^{(-)}])$ and $[K(R, *), K(R, *)]/ (Z(R) \cap [K(R, *), K(R, *)])$ are locally nondegenerate. We first prove a lemma.

\bigskip

\begin{lemma}\label{KB}
Suppose $B$ is a semisimple, finite dimensional associative algebra with involution $*$. Then $K_{loc}([K(B, *), K(B, *)]) \subseteq Z([K(B, *), K(B, *)])$.
\end{lemma}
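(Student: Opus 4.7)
The plan is to reduce the statement to a direct sum of classical Lie algebras via the structure theorem for $K$ of a semisimple algebra, and then to invoke the known nondegeneracy of those summands modulo their centers.

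First, I would apply Lemma \ref{matrix} to decompose $K(B,*) \cong \bigoplus_{i \in \Omega_0} K(M_{n_i}(F), *) \oplus \bigoplus_{i \in \Omega_1} M_{n_i}(F)^{(-)}$, with each $*$ on a matrix block being transposition or the symplectic involution. Taking commutators componentwise, $L := [K(B,*), K(B,*)]$ decomposes as a direct sum $\bigoplus_i L_i$, where each $L_i$ is either $[K(M_{n_i}(F), *), K(M_{n_i}(F), *)]$ or $[M_{n_i}(F)^{(-)}, M_{n_i}(F)^{(-)}]$.

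Next, I would check that both $K_{loc}$ and $Z$ respect Lie-algebra direct sums with zero mutual bracket: for $L = L_1 \oplus L_2$, any $S$-sequence starting at $x = x_1 + x_2$ splits across the components since $[x_1 + x_2, [x_1 + x_2, s_1 + s_2]] = [x_1, [x_1, s_1]] + [x_2, [x_2, s_2]]$. This yields $K_{loc}(L_1 \oplus L_2) = K_{loc}(L_1) \oplus K_{loc}(L_2)$ and $Z(L_1 \oplus L_2) = Z(L_1) \oplus Z(L_2)$, reducing the lemma to the claim that $K_{loc}(L_i) \subseteq Z(L_i)$ for each summand.

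For each $L_i$, the ambient associative algebra is the simple matrix algebra $R_i := M_{n_i}(F)$. By the Herstein-type simplicity results recalled in Section 2 together with the nondegeneracy statements of \cite{DFGG} (lemmas 5.2 and 5.13), when $n_i$ is large enough the quotient $L_i/(Z(R_i) \cap L_i)$ is a simple, nondegenerate Lie algebra; a direct computation identifies $Z(R_i) \cap L_i$ with the Lie center $Z(L_i)$ as the scalar matrices contained in $L_i$. For the finitely many exceptional small $n_i$, $L_i$ is itself abelian and the inclusion $L_i = Z(L_i)$ holds trivially. Since each $L_i$ is finite dimensional, nondegeneracy coincides with local nondegeneracy (as observed in the remark following the characteristic zero proposition), so $L_i/Z(L_i)$ is locally nondegenerate. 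By the minimality clause in the definition of $K_{loc}$, we conclude $K_{loc}(L_i) \subseteq Z(L_i)$, and assembling the summands finishes the proof. I expect the main obstacle to be the careful bookkeeping of the small-rank exceptions where the classical quotient either vanishes or becomes abelian, but these can be disposed of case by case using that $L_i$ becomes central there.
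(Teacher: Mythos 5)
Your proposal is correct and follows essentially the same route as the paper: decompose $[K(B,*),K(B,*)]$ via Lemma \ref{matrix} into classical summands and use that each summand modulo its center is nondegenerate, hence (being finite dimensional) locally nondegenerate. Your explicit treatment of the componentwise splitting of $S$-sequences and of the small-rank cases is if anything more careful than the paper's, which simply asserts each $K(M_{n_i}(F),*)$ is simple classical; the only slip is that an exceptional summand such as $\mathfrak{o}_4(F)$ need not be abelian, but it is still semisimple with trivial center and nondegenerate, so your conclusion stands.
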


\begin{proof}
Suppose $a$ is a nonzero element of $[K(B, *), K(B, *)]$ such that every $S$-sequence beginning with $a$ terminates. By lemma \ref{matrix}, 

\begin{eqnarray*}
[K(B,*), K(B,*)] \cong\bigoplus_{i \in \Omega_0} K(M_{n_i}(F), *) \bigoplus_{i \in \Omega_1} [M_{n_i}(F)^{(-)}, M_{n_i}(F)^{(-)}] 
\end{eqnarray*}

\noindent Each summand $K(M_{n_i}(F), *)$ is a simple classical Lie algebra, hence nondegenerate and therefore locally nondegenerate. Therefore, we must have that $a \in \bigoplus_{i \in \Omega_1} [M_{n_i}(F), M_{n_i}(F)] $. Since $[M_{n_i}(F), M_{n_i}(F)]  / Z([M_{n_i}(F), M_{n_i}(F)] )$ is nondegenerate, this implies that $a \in Z([K(B,*), K(B,*)] )$.
\end{proof}

\begin{proposition}\label{locndeg}
Suppose $R$ is a simple, locally finite associative algebra with involution $*$ over an algebraically closed field $F$. Then the Lie algebras $[K(R, *), K(R, *)]/ (Z(R) \cap [K(R, *), K(R, *)])$ and $[R^{(-)}, R^{(-)}] / (Z(R) \cap [R^{(-)}, R^{(-)}])$ are locally nondegenerate.
\end{proposition}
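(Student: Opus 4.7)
The plan is to argue by contradiction: suppose some nonzero $a$ in the quotient $L := [K(R,*), K(R,*)]/(Z(R) \cap [K(R,*), K(R,*)])$ has the property that, for every finite $S \subseteq L$, every $S$-sequence starting at $a$ terminates. Fix a lift $\tilde a \in [K(R,*), K(R,*)]$ together with an explicit skew decomposition $\tilde a = \sum_j [k_j, k_j']$ with $k_j, k_j' \in K(R,*)$. The goal is to force $\tilde a \in Z(R) \cap [K(R,*), K(R,*)]$, which contradicts $a \ne 0$.

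The first movement is to reduce to the finite-dimensional semisimple case. For each finite subset $\alpha \subseteq R$ that is closed under $*$ and contains the $k_j, k_j'$, let $A_\alpha$ be the finite dimensional $*$-invariant subalgebra it generates, and write $A_\alpha = B_\alpha \oplus N_\alpha$ as in Lemma \ref{levi}, so that $\tilde a \in [K(A_\alpha, *), K(A_\alpha, *)]$ and its image $\bar{\tilde a}$ modulo $N_\alpha$ lies in $[K(B_\alpha, *), K(B_\alpha, *)]$. I claim that $\bar{\tilde a}$ is in the local Kostrikin radical of $[K(B_\alpha, *), K(B_\alpha, *)]$: given any finite $\bar S$ in this Lie algebra, lift to $S \subseteq [K(A_\alpha, *), K(A_\alpha, *)]$ and run parallel $S$- and $\bar S$-sequences with identical bracket choices; projecting the $S$-sequence to $L$ yields a $\pi(S)$-sequence starting at $a$, which by hypothesis terminates, so some $x_n$ lies in $Z(R) \cap [K(R,*), K(R,*)]$. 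Then $x_n$ commutes with all of $A_\alpha$, hence $\bar x_n \in Z(B_\alpha)$, which forces $\bar x_{n+1} = [\bar x_n,[\bar x_n,\bar s]] = 0$. Applying Lemma \ref{KB} and the structural description of Lemma \ref{matrix} (the Lie center of each summand sits inside scalar matrices, which lie in $Z(B_\alpha)$) places $\bar{\tilde a} \in Z(B_\alpha)$, so $[\tilde a, A_\alpha] \subseteq N_\alpha$ for every such $\alpha$.

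The second movement converts this into a global ideal-theoretic contradiction. Consider the two-sided ideal $I := R\,[\tilde a, R]\,R$ of $R$ and check it is locally nilpotent: a finite $X \subseteq I$ can be written with finitely many elementary pieces $u_{jk}[\tilde a, v_{jk}]w_{jk}$, and enlarging $\alpha$ to contain the $u_{jk}, v_{jk}, w_{jk}$ together with the $k_j, k_j'$ and closing under $*$ yields $[\tilde a, v_{jk}] \in N_\alpha$, hence $X \subseteq N_\alpha$; since $N_\alpha$ is a nilpotent ideal of $A_\alpha$, the subalgebra generated by $X$ in $R$ is nilpotent. As $R$ is simple, Lemma \ref{loc nilp} rules out $I = R$, so $I = 0$; combined with $R = R \cdot R$ (which holds in every simple associative algebra), this forces $[\tilde a, R] = 0$, that is $\tilde a \in Z(R)$, contradicting $a \ne 0$. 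The argument for $[R^{(-)}, R^{(-)}]/(Z(R) \cap [R^{(-)}, R^{(-)}])$ is identical, dropping the involution throughout and replacing Lemma \ref{KB} by its direct analogue for $[B^{(-)}, B^{(-)}]$, which is immediate from the semisimple decomposition $B \cong \bigoplus M_{n_i}(F)$ and the (local) nondegeneracy of the simple classical Lie algebras $[M_{n_i}(F)^{(-)}, M_{n_i}(F)^{(-)}]/Z$.

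The main obstacle is the transfer in step two: one must propagate terminating behavior of $S$-sequences in the quotient $L$ down to $\bar S$-sequences inside $[K(B_\alpha, *), K(B_\alpha, *)]$, not merely inside $[K(A_\alpha, *), K(A_\alpha, *)]$. The key observation that unblocks this is that an element of $Z(R) \cap [K(R,*), K(R,*)]$ automatically becomes central in $B_\alpha$ modulo $N_\alpha$, and a central Lie element annihilates its own adjoint, so the corresponding $\bar S$-sequence terminates one step later; after that point Lemma \ref{KB} and the shape of the center of $[K(B_\alpha,*), K(B_\alpha,*)]$ do all the remaining work.
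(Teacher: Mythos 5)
Your proof is correct, and its core coincides with the paper's: both pass to a finite dimensional $*$-invariant subalgebra $A_\alpha = B_\alpha \oplus N_\alpha$ via Lemma \ref{levi}, transfer the termination hypothesis to the semisimple quotient, and apply Lemma \ref{KB}. Where you genuinely diverge is in the setup and the endgame. The paper first reduces everything to the involution case (handling $[R^{(-)}, R^{(-)}]$ through $R \oplus R^{op}$ with the exchange involution), invokes Herstein's theorem to know that the quotient Lie algebra $L$ is itself simple, and then derives the contradiction by showing that the Lie ideal of $L$ generated by $\overline{a}$ is locally solvable, against Lemma \ref{loc nilp}. You instead stay at the level of the associative algebra: from $\overline{\tilde a} \in Z(B_\alpha)$ for every admissible $\alpha$ you get $[\tilde a, A_\alpha] \subseteq N_\alpha$, show the associative ideal $R[\tilde a, R]R$ is locally nilpotent and hence zero, and conclude $\tilde a \in Z(R)$ directly. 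This buys you two things: you never need the simplicity of $L$ (so Herstein's Theorem 10 and its dimension hypotheses play no role), and the $[R^{(-)}, R^{(-)}]$ case goes through verbatim without the $R \oplus R^{op}$ detour. You also make explicit a step the paper passes over silently, namely that an $S$-sequence terminating in the quotient $L$ only lands in $Z(R) \cap [K(R,*), K(R,*)]$ upstairs, and that such an element is central in $B_\alpha$ so the projected sequence dies one step later. The small extra costs on your side are the standard fact that $RxR = (0)$ forces $x = 0$ in a simple algebra with $R^2 = R$, and the observation (needed equally by the paper) that the conclusion of Lemma \ref{KB} places $\overline{\tilde a}$ among the scalars of the $\mathfrak{sl}$-type summands of $B_\alpha$, hence in $Z(B_\alpha)$.
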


\begin{proof}
It suffices to prove that $L \cong [K(R, *), K(R, *)]/ (Z(R) \cap [K(R, *), K(R, *)])$ is locally nondegenerate when $R$ is an involution simple, locally finite associative algebra. By Theorem 10 from \cite{H}, $L$ is simple. Suppose $\overline{a} \in L$ is an element such that for any finite subset $S \subset L$, every $S$-sequence beginning with $\overline{a}$ terminates. We wish to show that the ideal generated by $\overline{a}$ is locally solvable, which will contradict lemma \ref{loc nilp} unless $\overline{a} = 0$.

\bigskip

\noindent Let $a$ be a preimage of $\overline{a}$ in $[K(R, *), K(R, *)]$ and choose $R_1$ to be a finite dimensional subalgebra of $R$ such that $a \in [K(R_1, *), K(R_1, *)] = L_1$. By lemma \ref{levi}, $R_1 = B_1 \oplus N_1$, so $[K(R_1, *), K(R_1, *)] \subseteq [K(B_1, *), K(B_1, *)] \oplus K(N_1, *)$. Suppose $a'$ is the image of $a$ in $R_1/N_1 \cong B_1$. Then $a'$ is an element of $[K(B_1, *), K(B_1, *)]$  such that every $S$-sequence beginning with $a'$ terminates for every finite subset $S \subset [K(B_1, *), K(B_1, *)]$. Thus by lemma \ref{KB}, $a' \in Z([K(B_1, *), K(B_1, *)])$, which shows that the ideal of $L_1$ generated by $a$ is solvable. This implies that the ideal generated by $\overline{a}$ is locally solvable in $L$, thus $K_{loc}(L) = (0)$.

\end{proof}

\noindent The following proposition will be used in the proof of Theorem 1.

\begin{proposition}\label{Kloc}
Let $x$ be a Jordan element in a Lie algebra $L$. If the Jordan algebra $L_x$ is locally nilpotent, then $x \in K_{loc}(L)$.
\end{proposition}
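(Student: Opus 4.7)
The plan is to show that for every finite subset $S \subseteq L$, every $S$-sequence in $L$ starting at $x$ terminates; the inclusion $x \in K_{loc}(L)$ will then follow by a standard quotient argument. I would fix such an $S$ and a sequence $x_1 = x$, $x_{n+1} = ad_{x_n}^2(s_n)$ with $s_n \in S$, and write $X = ad_x$. Identity (vii) of lemma \ref{identities} gives $ad_{X^2(a)}^2 = X^2 A^2 X^2$, so by induction $x_{n+1} = X^2(w_n)$ where $w_1 = s_1$ and $w_{n+1} = ad_{w_n}^2 X^2(s_{n+1})$.

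Passing to $L_x$ and using the $U$-operator formula $U_{\overline{a}}(\overline{b}) = \overline{ad_a^2\, ad_x^2(b)}$ of lemma \ref{jordan}, the recursion becomes $\overline{w_{n+1}} = U_{\overline{w_n}}(\overline{s_{n+1}})$, so $\{\overline{w_n}\}$ is an $\overline{S}$-sequence in $L_x$ lying inside the Jordan subalgebra generated by the finite set $\overline{S}$. Since $L_x$ is locally nilpotent, this subalgebra is nilpotent, say of index $M$; since $U_a$ is quadratic in $a$ and linear in its argument, the Jordan-product degree of $\overline{w_n}$ at least doubles at each step, so $\overline{w_N} = 0$ whenever $2^{N-1} \ge M$. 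Consequently $w_N \in \ker_L(x)$, whence $x_{N+1} = X^2(w_N) = 0$, and the $S$-sequence terminates.

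To conclude, I would take an arbitrary ideal $I \lhd L$ with $L/I$ locally nondegenerate. Given a finite subset $\overline{T} \subseteq L/I$, choose a lift $T \subseteq L$. Any $\overline{T}$-sequence in $L/I$ starting at $\overline{x}$ lifts inductively to a $T$-sequence in $L$ starting at $x$: at step $n$, pick $s_n \in T$ whose image gives the chosen element of the Jordan sequence, and set $x_{n+1} = ad_{x_n}^2(s_n)$; the lift projects back to the original sequence. By what was just shown, the lifted sequence terminates, hence so does the original, and local nondegeneracy of $L/I$ forces $\overline{x} = 0$, i.e., $x \in I$. Intersecting over all such $I$ gives $x \in K_{loc}(L)$.

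The main obstacle is the recursion in the first paragraph and the verification in the second that its image in $L_x$ is genuinely an $\overline{S}$-sequence contained in a single finitely generated Jordan subalgebra; once this link is established, termination of $U$-sequences in a nilpotent Jordan algebra and the quotient lifting argument are routine.
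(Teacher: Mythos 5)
Your proof is correct and follows essentially the same route as the paper: the same induction via identity (vii) producing elements $w_n$ (the paper's $c_n$) with $x_{n+1}=X^2(w_n)$ and $\overline{w_{n+1}}=U_{\overline{w_n}}(\overline{s_{n+1}})$, then termination inside the nilpotent Jordan subalgebra generated by $\overline{S}$. You merely make explicit two steps the paper leaves implicit --- the degree-doubling reason an m-sequence in a nilpotent Jordan algebra dies, and the lifting of $\overline{T}$-sequences to deduce $x\in K_{loc}(L)$ --- both of which are fine.
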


\begin{proof}
Fix $S= \{s_1, \dots, s_p \}$, a finite set from $L$. Then $\overline{S}$ denotes the finite set $\{\overline{s_1}, \dots, \overline{s_p} \}$ of $L_x$. Suppose $\{ x_n \}$ is an $S$-sequence which begins with $x$. Thus $x_0 = x$ and for each $n \ge 1$, $x_n = [x_{n-1}, [x_{n-1}, s_n]]$ for some $s_n \in S$. 

\bigskip

\noindent We prove by induction that for every $n \ge 1$, there is some $c_n \in$ Lie$(S \cup \{ x \})$ such that $x_n = [x,[x, c_n]]$. For $n = 1$, set $c_1 = s_1$. Suppose the result holds for some $n$, that is, $x_n = ad_x^2(c_n)$. Then $x_{n+1} = [x_n, [x_n, s_n]] = ad^2_{X^2(c_n)}s_n = ad_x^2ad^2_{c_n}ad_x^2(s_n)$ by lemma \ref{identities}. Setting $c_{n+1} = ad^2_{c_n}ad_x^2(s_n)$ finishes the argument.

\bigskip

\noindent Now consider the sequence $\{ \overline{c_n}\}$ in $L_x$. We have that $U_{\overline{c_n}}{\overline{s_n}} = \overline{c_{n+1}}$, thus every element of the sequence is contained in the subalgebra of $L_x$ generated by $\overline{S}$, which is finite dimensional and nilpotent since we assumed $L_x$ is locally nilpotent. Thus, the length of $\{ \overline{c_n}\}$ must be finite and the sequence must terminate. Thus $\overline{c_N} = \overline{0}$ for some $N$ and $x_n = [x,[x,c_n]] = 0$. Therefore, any $S$ sequence beginning with $x$ terminates, and we conclude that $x \in K_{loc}(L)$.
\end{proof}

\section{\large \textbf{Proof of Theorem 1}}\label{proof1}

\noindent This section is devoted to the proof of the following theorem.

\begin{theorem*}
Let $L$ be a simple, infinite dimensional, locally finite Lie algebra over an algebraically closed field $F$ of characteristic $p > 7$ or characteristic zero. Then the following conditions are equivalent:
\begin{enumerate}
\item $L$ is locally nondegenerate and there is some nonzero element $x \in L$ such that $ad(x)^{p-1} = 0$. 
\item $L \cong [R^{(-)}, R^{(-)}] / (Z(R) \cap [R^{(-)}, R^{(-)}])$ where $R$ is a locally finite, simple associative algebra or $L \cong [K(R, *), K(R, *)]$ where $R$ is a locally finite, simple associative algebra with involution $*$.
\end{enumerate}
\end{theorem*}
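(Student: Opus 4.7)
The direction $(2)\Rightarrow(1)$ is essentially packaged by the preliminaries: local nondegeneracy is Proposition \ref{locndeg}, and to exhibit an element with $ad(x)^{p-1}=0$ it suffices to find a finite-dimensional matrix subalgebra of $R$ (or of the appropriate $(M_n(F),\ast)$ inside $R$) and pick a rank-one nilpotent $r$ with $r^2=0$; then $ad(r)^3=0$ in the commutator Lie algebra, so certainly $ad(r)^{p-1}=0$ since $p\ge 7$. That $r$ survives in $[R^{(-)},R^{(-)}]/(Z(R)\cap[R^{(-)},R^{(-)}])$ (resp.\ in $[K(R,\ast),K(R,\ast)]$) follows from writing $r$ as a commutator, e.g.\ $e_{12}=[e_{11},e_{12}]$, and by using a suitable symplectic matrix subalgebra in the skew case.

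The content is in the converse $(1)\Rightarrow(2)$. First I iterate Kostrikin's Descent Lemma \ref{kostrikin} on the element with $ad(x)^{p-1}=0$ to descend to a nonzero Jordan element $y\in L$. I then form the locally finite Jordan algebra $L_y$ of Lemma \ref{jordan}. If $L_y$ were locally nilpotent, Proposition \ref{Kloc} would put $y\in K_{\mathrm{loc}}(L)$, but $L$ is simple and locally nondegenerate, so $K_{\mathrm{loc}}(L)=(0)$ and $y=0$, contradicting $y\ne 0$. Hence $L_y$ is not locally nilpotent, so it contains a finite-dimensional non-nilpotent Jordan subalgebra and therefore a nonzero idempotent. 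A standard lift (using the $U$-operator identity of Lemma \ref{jordan} and Lemma \ref{identities}(vii)) produces from this idempotent a nonzero von Neumann regular element $e\in L$.

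Next I feed $e$ into Lemma 14 to obtain an idempotent pair $(e,f)$, an $\mathfrak{sl}_2$-triple $(e,h,f)$ with $h=[e,f]$, and the induced finite $\mathbb Z$-grading $L=L_{-2}+L_{-1}+L_0+L_1+L_2$. Extend the grading to the universal enveloping algebra $U(L)$ and let $R_0$ be the associative subalgebra of $U(L)$ generated by $\bigcup_{i\ne 0}L_i$. Because the grading has width $M=2$ and $p>7>M$, Lemma \ref{loc finite} applies and shows that every finite subset of $\bigcup_{i\ne 0}L_i$ generates a finite-dimensional associative subalgebra of $R_0$, so $R_0$ is locally finite. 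The graded analog of Lemma \ref{simplegraded} (noted there for simple graded Lie algebras) gives $L=\mathrm{Lie}(\bigcup_{i\ne 0}L_i)\subseteq R_0$, embedding the simple Lie algebra $L$ into a locally finite associative algebra.

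It remains to pass from this embedding to the concrete description in (2). I quotient $R_0$ by a maximal ideal disjoint from $L$ to get a simple locally finite associative algebra $R$ containing $L$; the existence of such an ideal is guaranteed by a Zorn's lemma argument together with the simplicity of $L$. Using Lemmas \ref{levi}, \ref{matrix}, \ref{pi}, the centroid machinery of Section 2.2, and Herstein's simplicity theorems recalled in Section 2, I then identify $L$, up to isomorphism, with $[R^{(-)},R^{(-)}]/(Z(R)\cap[R^{(-)},R^{(-)}])$ or with $[K(R,\ast),K(R,\ast)]$ for an appropriate involution $\ast$ on $R$. \textbf{The main obstacle} is precisely this last step: in characteristic $p$ the center intersection and the commutator cannot be dropped (as Theorem \ref{example} will show), so one cannot simply quote \cite{BBZ}. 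One must carefully track, through the finite-dimensional envelopes and their semisimple Levi decompositions (Lemma \ref{levi}), which of the two Herstein forms is realized and control the center of $R$ via the central polynomial argument and Lemma \ref{pi}; this is where the bulk of the work will lie.
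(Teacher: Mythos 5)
Your proof of $(2)\Rightarrow(1)$ and the first half of $(1)\Rightarrow(2)$ — Kostrikin descent to a Jordan element, the Jordan algebra $L_x$, the idempotent obtained by ruling out local nilpotence via Proposition \ref{Kloc}, the lift to a von Neumann regular element, and the resulting finite grading $L=L_{-2}+\dots+L_2$ — match the paper. But from that point on there is a genuine gap. The paper's next move, which you never make, is to invoke Zelmanov's classification of simple Lie algebras with a finite $\mathbb Z$-grading (Theorem 1 of \cite{Z}): this is the engine that produces the associative algebra $R$, already equipped with a \emph{finite} grading $R=\sum_{i=-2}^{2}R_i$, and reduces the problem to four explicit cases. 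Your substitute — embedding $L$ into the subalgebra $R_0$ of $U(L)$ generated by $\bigcup_{i\ne 0}L_i$ and then quotienting by a maximal ideal — does not work as stated. Lemma \ref{loc finite} requires the ambient associative algebra to carry a finite nontrivial $\mathbb Z$-grading $\sum_{i=-M}^{M}R_i$; the grading that $U(L)$ inherits from $L$ is unbounded (products such as $L_2\cdot L_2\cdot L_2\cdots$ land in arbitrarily high degree and need not vanish in $U(L)$), so the hypothesis $p>M$ and the degree-counting argument in that lemma's proof are unavailable. Moreover, even granting an embedding of $L$ into a simple locally finite associative algebra, concluding that $L$ is one of the two Herstein forms is essentially the whole theorem (it is the content of \cite{BBZ} in characteristic zero and the open question of Zalesskii in characteristic $p$); your final paragraph acknowledges this is "where the bulk of the work will lie" but supplies no argument.

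Two further pieces of the paper's proof are absent from your plan and are not optional. Zelmanov's theorem yields, besides the two desired forms, a case where $L$ is the Tits--Kantor--Koecher algebra of the Jordan algebra of a symmetric bilinear form — the paper handles this by an explicit direct-limit construction realizing $TKK(J)$ as $K(R,\ast)$ for a locally finite simple $R$ — and an exceptional case ($G_2,F_4,E_6,E_7,E_8,D_4$), which is eliminated by the centroid/central-polynomial argument of Section \ref{C} showing that $L$ would be finite dimensional, contrary to hypothesis. Without the classification theorem you have no way to enumerate and dispose of these possibilities.
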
 

\noindent First we prove the backward direction, (2) implies (1). 
\bigskip

\noindent Suppose $R$ is a simple, locally finite associative algebra and $L \cong [R^{(-)}, R^{(-)}] / (Z(R) \cap [R^{(-)}, R^{(-)}])$. By theorem 4 of \cite{H}, $L$ is locally finite and simple. By proposition \ref{locndeg}, $L$ is locally nondegenerate. It remains to show that $L$ contains an ad-nilpotent element.

\bigskip

\noindent Let $\alpha$ be a finite subset of $R$ and let $A_{\alpha}$ denote the finite dimensional, associative algebra generated by $\alpha$. By lemma \ref{levi}, $A_{\alpha} = B_{\alpha} \oplus N_{\alpha}$ where $B_{\alpha}$ is semisimple and $N_{\alpha}$ is nilpotent. Since $F$ is algebraically closed, $B_{\alpha}$ is a direct sum of matrix rings $B_{\alpha} = \oplus M_{n_i}(F)$. 

\bigskip

\noindent If $n_i < 2$ for every $i$ and $\alpha$, then $B_{\alpha}$ is commutative for every $\alpha$ which implies $R$ is commutative by lemma \ref{pi} and L = (0). Hence, we must have that $n_i \ge 2$ for some $\alpha \subset R$. Then $[R^{(-)}, R^{(-)}] \supset [M_n(F)^{(-)}, M_n(F)^{(-)}] \cong \mathfrak{sl}_n(F)$ for $n \ge 2$.

\bigskip

\noindent If $n = 2k$ is even, we set $a := \left( \begin{array}{cc} 0 & I \\ 0 & 0 \end{array} \right)$ where $I$ denotes the $k \times k$ identity matrix. If $n = 2k+1$ is odd, we set $a := \left( \begin{array}{ccc} 0 & I & 0 \\ 0 & 0 & 0 \\ 0 & 0 & 0 \end{array} \right)$, where $I$ denotes the $k \times k$ identity matrix. In either case, $a$ is a nonzero element of $R$ such that $a^2 = 0$, implying that $ad(a)^3 = 0$ in $[R^{(-)}, R^{(-)}]$. Clearly $a$ is not in the center of $R$, thus we have that $ad(\overline{a})^3 = 0$ in $L$.

\bigskip

\noindent Now suppose that $L \cong [K(R, *), K(R, *)]$. Since $F$ is algebraically closed, $Z(R) \cap [K(R, *), K(R, *)] = (0)$. Hence, $L$ is locally finite and simple by Theorem 10 from \cite{H}. Furthermore, $L$ must be locally nondegenerate by proposition \ref{locndeg}. 

\bigskip

\noindent The last step is produce an ad-nilpotent element. As before, let $A_{\alpha}$ denote the finite dimensional, associative algebra generated by $\alpha$, a finite subset of $R$. Without loss of generality we may further assume that $A_{\alpha}$ is invariant under the involution $*$. By lemma \ref{levi}, $A_{\alpha} = B_{\alpha} \oplus N_{\alpha}$ where $B_{\alpha}$ is semisimple and $*-$invariant. As before, we conclude that there is some finite subset $\alpha$ such that $B_{\alpha}$ is isomorphic to a direct sum of matrix rings $B_\alpha = \oplus_i M_{n_i}(F)$ where $n_i \ge 2$ for some $i$. 

\bigskip

\noindent By lemma \ref{matrix}, $K(B_{\alpha},*) \cong \bigoplus_{i \in \Omega_0} K(M_{n_i}(F), *) \bigoplus_{i \in \Omega_1} M_{n_i}(F)^{(-)}$. Therefore, $[K(B_{\alpha},*), K(B_{\alpha},*)]$ must contain a subalgebra isomorphic to $\mathfrak{o}_{n}(F)$, $\mathfrak{sp}_{n}(F)$, or $\mathfrak{sl}_{n}(F)$ for some $n \ge 2$. In each case we can find a nonzero element $a$ such that $a^2 = 0$, implying that $ad(a)^3 = 0$ in $L$. 

\bigskip

\noindent This was already done for $\mathfrak{sl}_{n}(F)$. For $\mathfrak{sp}_{n}(F)$ for $n=2k$, we use the realization of $\mathfrak{sp}_{n}(F)$ as those matrices which preserve the skew-symmetric form given by $\left( \begin{array}{cc} 0 & I_k \\ -I_k & 0 \end{array} \right)$ where $I_k$ denotes the $k \times k$ identity matrix. Then $a := \left( \begin{array}{cc} 0 & B \\ 0 & 0 \end{array} \right)$, where $B$ is any nonzero $k \times k$ symmetric matrix, will be an element of $\mathfrak{sp}_{n}(F)$ whose square is zero. 

\bigskip

\noindent Similarly, we realize $\mathfrak{o}_{n}(F)$ as those matrices which preserve the symmetric form given by $\left( \begin{array}{cc} 0 & I_k \\ I_k & 0 \end{array} \right)$ if $n = 2k$ is even or $\left( \begin{array}{ccc} 1 & 0 & 0 \\ 0 & 0 & I_k \\ 0 & I_k & 0 \end{array} \right)$ if $n=2k+1$ is odd. We set $a := \left( \begin{array}{cc} 0 & C \\ 0 & 0 \end{array} \right)$ if $n = 2k$ is even or $a := \left( \begin{array}{ccc} 0 & 0 & 0 \\ 0 & 0 & C \\ 0 & 0 & 0 \end{array} \right)$ if $n=2k+1$ is odd, where $C$ is any nonzero $k \times k$ skew-symmetric matrix. 

\bigskip

\bigskip

\noindent Now, we prove the forwards direction, (1) implies (2).

\bigskip

\noindent Suppose $L$ is a simple, locally finite Lie algebra over $F$ which is locally nondegenerate and contains some nonzero element $x \in L$ such that ad$(x)^{p-1} = 0$. By the results of section \ref{jordanelements}, we may assume $x$ is a Jordan element, that is, ad$(x)^{3} = 0$, and the Jordan algebra $J = L_x$ is locally finite. 

\bigskip

\noindent Our goal is to prove that $J$ must contain a nonzero idempotent. If we suppose to the contrary that $J$ does not contain a nonzero idempotent, then by lemma 1 of 3.7 in \cite{J2}, every $a \in J$ must be nilpotent, so $J$ is nil. The following is a corollary of a theorem of Albert and can be found in \cite{J2}:
A locally finite, nil Jordan algebra is locally nilpotent.

\bigskip

\noindent Therefore, $J = L_x$ is a locally nilpotent Jordan algebra. By lemma \ref{Kloc}, $x$ must be contained in the local Kostrikin radical, $K_{loc}(L)$. But this is a contradiction since we have assumed that $L$ is locally nondegenerate. Therefore, $J = L_x$ must contain a nonzero idempotent, $\overline{e}^2 = \overline{e}$.

\bigskip

\noindent Let $R_{\overline{e}}$ denote right multiplication by $\overline{e}$ in $J$. Then we have the Pierce decomposition of $J$ into eigenspaces with respect to $R_{\overline{e}}$:

\begin{eqnarray*}
J = \{ \overline{e}, J, \overline{e}\} + \{1- \overline{e}, J, 1-\overline{e}\} + \{ \overline{e}, J, 1- \overline{e}\}
\end{eqnarray*}

\bigskip

\noindent Since $R_{\overline{e}}(\overline{e}) = \overline{e}^2 = \overline{e}$, we see that $\overline{e} \in \{ \overline{e}, J, \overline{e}\}$. Therefore, there is some $\overline{a} \in J$ 
such that $\overline{e} = \{ \overline{e}, \overline{a}, \overline{e}\}$. By lemma \ref{jordan}, $\{ \overline{e}, \overline{a}, \overline{e}\} = ad_{\overline{e}}^2ad_{\overline{x}}^2 \overline{a} = \overline{e}$. Hence, when we lift to the Lie algebra $L$, we have that

\begin{eqnarray*}
X^2E^2X^2 a = ad_x^2ad_e^2ad_x^2 a = ad_x^2e = X^2(e)
\end{eqnarray*}

\bigskip

\noindent By lemma \ref{identities}, $X^2E^2X^2 = ad^2_{X^2(e)}$. Hence $X^2(e) \in ad^2_{X^2(e)}(L)$. 

\bigskip

\noindent Set $e' = ad_x^2e = X^2(e)$. Then we have shown that $e' \in ad^2_{e'}(L)$. Moreover, by lemma \ref{identities}, $e'$ is a Jordan element of $L$. Hence, $e'$ is a von Neumann regular element. 

\bigskip

\noindent By the results of section \ref{jordanelements}, we obtain a finite $\mathbb{Z}$-grading on $L$ via the action of $ad(h)$ for some $h \in [e', L]$:

\begin{eqnarray*}
L = L_{-2} + L_{-1} + L_0 + L_1 + L_2
\end{eqnarray*}

\bigskip

\noindent Therefore, $L$ is a simple, $\mathbb{Z}$-graded Lie algebra over a field of characteristic $p > 7$ (or of characteristic zero). Hence, we can utilize theorem 1 from \cite{Z}:
Since $L = \sum_{i = -2}^{2} L_i$ is a simple graded Lie algebra over a field of characteristic at least $7$ (or of characteristic 0) and $\sum_{i \ne 0} L_i \ne 0$, we have that $L$ is isomorphic to one of the following algebras:
\begin{itemize}
\item[I.] $[R^{(-)}, R^{(-)}] / Z$, where $R =  \sum_{i = -2}^{2} R_i$ is a simple associative $\mathbb{Z}-$graded algebra.
\item[II.] $[K(R, *), K(R, *)] / Z$, where $R =  \sum_{i = -2}^{2} R_i$ is a simple associative $\mathbb{Z}-$graded algebra with involution $*: R \rightarrow R$.
\item[{III.}] The Tits-Kantor-Koecher construction of the Jordan algebra of a symmetric bilinear form.
\item[{IV.}] An algebra of one of the types $G_2, F_4, E_6, E_7, E_8$ or $D_4$.
\end{itemize}

\bigskip

\noindent In cases I and II, it remains to show that the simple, graded associative algebra $R$ is locally finite. Suppose we are in case I, that is, $L \cong [R^{(-)}, R^{(-)}] / (Z(R) \cap [R^{(-)}, R^{(-)}])$. Since $Z(R) \cap [R^{(-)}, R^{(-)}]$ is an ideal which is locally finite and central, $[R^{(-)}, R^{(-)}]$ is a locally finite Lie algebra. Choose a finite set $a_1, \dots, a_n$ from $R$. Since $R$ is generated by $\bigcup_{i \ne 0} R_i$ by lemma \ref{simplegraded}, we may assume that $a_1, \dots, a_n$ are homogeneous of nonzero degree. Note that for $i \ne 0$, $R_i \subseteq [R^{(-)}, R^{(-)}]$ since for any $a \in R_i$, $[h, a] = ia$. Then since $[R^{(-)}, R^{(-)}]$ is locally finite, Lie$(a_1, \dots, a_n)$ is finite dimensional, and so Assoc$(a_1, \dots, a_n)$ is finite dimensional by lemma \ref{loc finite}.  

\bigskip

\noindent Similarly, suppose we are in case II, that is, $L \cong [K(R, *), K(R, *)]$. Again, we may choose a finite set $a_1, \dots, a_n$ from $R$. By theorem 2.13 of \cite{H3}, $L$ generates $R$ as an associative algebra. Therefore, Assoc$(a_1, \dots, a_n) \subseteq$ Assoc$(\{x_i\})$ for some finite set $\{x_i\} \subset L$. By lemma \ref{simplegraded}, we can assume that the $x_i$ each have nonzero degree. Hence, since Lie$(\{x_i\})$ is finite dimensional, we can apply lemma \ref{loc finite} to conclude that Assoc$(\{x_i\})$, and in turn Assoc$(a_1, \dots, a_n)$, is finite dimensional.

\bigskip

\noindent In case III, it has been noted in the literature that $L = TKK(J)$ is isomorphic to $K(R, *)$ for some simple associative algebra $R$ with involution $*$ (\cite{DFGG}, \cite{Z}). We provide an explanation of this fact in order to demonstrate that the associative algebra $R$ is also locally finite.

\bigskip

\noindent Suppose $V'$ is a finite dimensional vector space over a field $F$ equipped with a nondegenerate, symmetric bilinear form $f$ and $J' = F\cdot1 + V' $ is the resulting Jordan algebra. Suppose $W = Fv + Fu + Fw$ is the 3-dimensional vector space equipped with a bilinear form $g$ given by the matrix

\begin{eqnarray*}
\left( \begin{array}{ccc} 0 & 1 & 0 \\
1 & 0 & 0 \\
0 & 0 & -1 \end{array} \right)
\end{eqnarray*}

\noindent Let $M^* = V' + W$ and define a bilinear form $h$ on $M^*$ so that $V' \perp W$, $h = f$ on $V'$ and $h = g$ on $W$. It is shown in \cite{J2} that $TKK(J)$ is isomorphic to the Lie algebra of endomorphisms of $M^*$ which are skew relative to $h$. 

\bigskip

\noindent We wish to generalize this construction for an infinite dimensional vector space. Suppose $V$ is any vector space over $F$ which is equipped with a nondegenerate symmetric form $f$ and $J$ is the Jordan algebra $F\cdot1 + V$. Then every finite collection of elements from $V$ can be embedded into a finite dimensional subspace $V'$ such that the restriction of $f$ to $V'$ is nondegenerate.

\bigskip

\noindent Let $\{V_{\alpha}\}_{\alpha \in I}$ be the collection of finite dimensional subspaces of $V$ with $f$ restricted to $V_{\alpha}$ nondegenerate. If $V_{\alpha} \subset V_{\beta}$, then we have $V_{\beta} = V_{\alpha} \oplus V_{\alpha}^{\perp}$. Then $I$ is a directed set with $\alpha \le \beta$ if $V_{\alpha} \subset V_{\beta}$. This is a local system of subspaces for $V$. Let $R_{\alpha} = End_F(V_{\alpha} + W)$. Then $h$ restricted to $V_{\alpha} + W$ is nondegenerate and symmetric. Then for $\alpha \le \beta$, we obtain an embedding $\phi_{\alpha, \beta}: R_{\alpha} \hookrightarrow R_{\beta}$ given by extending the action of each $x \in R_{\alpha}$ to an action on $V_{\beta}$ by having $x$ act trivially on $V_{\alpha}^{\perp}$. Notice that for $\alpha \le \beta \le \gamma$, $\phi_{\beta, \gamma} \circ \phi_{\alpha, \beta} = \phi_{\alpha \circ \gamma}$. Define $R$ to be the direct limit of $\{ R_{\alpha}, \phi_{\alpha, \beta}\}$. Then $R$ is a locally finite, simple associative algebra.

\bigskip

\noindent Let $K_{\alpha}$ denote the Lie algebra of elements of $R_{\alpha}$ which are skew with respect to the restriction of $h$ to $V_{\alpha} + W$. Each homomorphism $\phi_{\alpha, \beta}$ restricts to a Lie algebra homomorphism $\phi_{\alpha, \beta}: K_{\alpha} \hookrightarrow K_{\beta}$. Then $K = \varinjlim K_{\alpha}$ is the Lie algebra of elements of $R$ which are skew with respect to $h$.

\bigskip

\noindent Similarly, the set $\{ J_{\alpha} = V_{\alpha} + F\cdot1\}$ is a local system for $J$. For each $\beta \ge \alpha$, we have $V_{\alpha} \subset V_{\beta}$ and so we obtain an embedding $\rho_{\alpha, \beta}: TKK(J_{\alpha}) \hookrightarrow TKK(J_{\beta})$. Thus $\{TKK(J_{\alpha})\}$ is a local system for $TKK(J)$ and $TKK(J) = \varinjlim TKK(J_{\alpha})$. By \cite{J2}, for each $\alpha \in I$ there is an isomorphism $\psi_{\alpha}: K_{\alpha} \cong TKK(J_{\alpha})$ and one may check that $\psi_{\beta} \circ \phi_{\alpha, \beta} = \rho_{\alpha, \beta} \circ \psi_{\alpha}$. Thus, we obtain an isomorphism between $TKK(J)$ and $K$.


\bigskip

\noindent In case IV, the Lie algebra $L$ is finite dimensional over its centroid. By the results of section \ref{C}, the centroid of $L$ is an algebraic field extension. Since $F$ is algebraically closed, the centroid must be $F$ itself, in which case $L$ is a finite dimensional Lie algebra. Hence, since we have assumed that $L$ is infinite dimensional, case IV can be eliminated.

\section{\large \textbf{Proof of Theorem 2}}

\noindent In this section we provide the proof of Theorem 2. We begin with some preliminary results. The following lemma can be found in \cite{Pre}.

\begin{lemma}[Premet]\label{premet}
Suppose $p > 3$. Let $L$ be a simple, finite dimensional Lie algebra generated by elements satisfying $ad_x^3(L) = 0$. Then $L$ is of classical type. 
\end{lemma}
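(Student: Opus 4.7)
My plan is to invoke the Block--Wilson--Strade--Premet classification of simple finite-dimensional Lie algebras over an algebraically closed field of characteristic $p > 3$, which asserts that $L$ is either of classical type, of Cartan type (a graded simple subalgebra of $W$, $S$, $H$, or $K$), or, only when $p = 5$, of Melikyan type. The lemma would then follow from ruling out the Cartan and Melikyan types under the hypothesis that $L$ is generated by Jordan elements.

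For the Cartan case, I would work with the canonical filtration $L = L_{-s} \supset \cdots \supset L_0 \supset L_1 \supset \cdots \supset L_r \supset (0)$, which is preserved by all automorphisms and satisfies $[L_i, L_j] \subseteq L_{i+j}$ with $L_0$ of positive codimension. The condition $ad_x^3 = 0$ imposes a strong constraint on the leading graded component of $x$: in each of the families $W$, $S$, $H$, $K$, analyzing the possible $ad$-actions on the $-1$ component shows that nonzero Jordan elements lie in a proper $ad$-invariant subspace $J \subseteq L$. Since the subalgebra generated by $J$ is then contained in a proper subalgebra of $L$, simplicity yields a contradiction with the assumption that Jordan elements generate $L$. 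The Melikyan case at $p = 5$ would be treated analogously via its intrinsic $\mathbb{Z}$-grading, with a similar inspection of each graded component.

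The main obstacle is the uniform case-by-case analysis of the four Cartan series, since the admissible ad-nilpotent indices in the graded components differ between $W$, $S$, $H$, $K$, and further subtleties arise in characteristic $5$ from the Melikyan exception. Premet's argument in \cite{Pre} bypasses some of this by producing, from the generating Jordan elements, a nontrivial $\mathfrak{sl}_2$-triple whose adjoint action decomposes $L$ into modules incompatible with any non-classical filtration; I would follow this cleaner route rather than itemize each non-classical family in detail. A secondary subtlety is the treatment of small rank anomalies in the Cartan series, where the naive filtration argument degenerates and one must appeal directly to low-dimensional isomorphisms with classical algebras.
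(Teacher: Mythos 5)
The paper gives no proof of this lemma at all: it is quoted as a known theorem with the single line ``The following lemma can be found in \cite{Pre}.'' So there is no argument of the paper's to compare yours against, and the honest version of your write-up would likewise be a citation. As a proof attempt, what you have written is a strategy whose entire mathematical content is deferred. Invoking the Block--Wilson--Strade--Premet classification (much heavier machinery than, and historically posterior to, the 1983 paper actually cited) reduces the lemma to showing that no simple algebra of Cartan type, and no Melikyan algebra for $p=5$, is generated by elements $x$ with $ad_x^3(L)=0$. That exclusion is exactly where all the work lies, and you do not carry it out for even one of the families $W$, $S$, $H$, $K$ (in either their restricted or non-restricted versions), nor for the Melikyan algebras; your closing appeal to ``Premet's argument in \cite{Pre}'' concedes that the lemma is being cited rather than proved.

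Moreover, the one concrete mechanism you do name cannot work as stated. You claim that in each Cartan family the nonzero Jordan elements lie in a \emph{proper $ad$-invariant subspace} $J \subseteq L$. An $ad$-invariant subspace of $L$ is an ideal, so in a simple $L$ it is either $(0)$ or all of $L$; and the Cartan type algebras do contain nonzero elements with $ad_x^3(L)=0$ --- for instance $x^{p-1}\partial$ is a sandwich in $W(1;1)$, and more generally the non-classical simple algebras are precisely the strongly degenerate ones (cf.\ \cite{P}) --- so $J \ne (0)$ and hence $J$ cannot be proper. What must actually be shown is something weaker and harder: that these elements generate a proper \emph{subalgebra} (e.g.\ that they are trapped in the standard filtration subalgebra $L_{(0)}$), or that their span is merely $\mathrm{Aut}(L)$-invariant and the extra hypotheses needed to promote such a subspace to an ideal (as in Lemma \ref{ideal} of this paper) fail. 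That is a genuine case-by-case computation, sensitive to the non-restricted parameters and to the $p=5$ exceptions, and it is missing from the proposal.
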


\noindent The next result will be used several times in the proof of Theorem 2.

\begin{lemma}\label{ideal}
Suppose $L$ is a Lie algebra over a field $F$ of characteristic $p$ (or zero) which is generated by a set $X$. Suppose for all $x \in X$, $ad(x)^d L = (0)$ for some $d > p$ and $exp(\xi ad(x))$ is a well-defined automorphism of $L$ for every $\xi \in F$. If $V$ is a subspace of $L$ which is invariant under Aut$(L)$, then $V$ is an ideal of $L$.
\end{lemma}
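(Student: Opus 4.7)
The plan is to exploit $\mathrm{Aut}(L)$-invariance of $V$ through the one-parameter families $\xi \mapsto \exp(\xi\,ad(x))$ attached to each generator $x \in X$, and then to peel off the individual powers of $ad(x)$ by polynomial interpolation over the (necessarily infinite) field $F$. Once $[x,V] \subseteq V$ is established for each $x \in X$, a standard normalizer argument closes the proof.

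Concretely, I would fix $x \in X$ and $v \in V$. Since $\exp(\xi\,ad(x)) \in \mathrm{Aut}(L)$ by hypothesis, $\mathrm{Aut}(L)$-invariance of $V$ gives
\[
P(\xi) := \exp(\xi\,ad(x))(v) = \sum_{k=0}^{d-1} \tfrac{\xi^k}{k!}\, ad(x)^k(v) \in V
\]
for every $\xi \in F$. Regard the coefficients $c_k := \tfrac{1}{k!}\,ad(x)^k(v) \in L$ as unknowns. Since $F$ is algebraically closed, it is infinite, so I would pick $d$ distinct scalars $\xi_1, \dots, \xi_d \in F$. The Vandermonde matrix $M = (\xi_i^k)_{1 \le i \le d,\ 0 \le k \le d-1}$ is invertible over $F$, and the system $P(\xi_i) = \sum_k M_{ik}\, c_k$ inverts to write each $c_k$ as an $F$-linear combination of the $P(\xi_i)$, all of which lie in $V$. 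Hence every $c_k \in V$; in particular, $c_1 = ad(x)(v) = [x,v] \in V$.

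To finish, I would set $N := \{y \in L : [y, V] \subseteq V\}$. A one-line application of the Jacobi identity shows that $N$ is a Lie subalgebra of $L$, and the previous paragraph shows $X \subseteq N$. Because $L$ is generated as a Lie algebra by $X$, this forces $N = L$, i.e.\ $V$ is an ideal. The only mildly delicate input is the hypothesis that $\exp(\xi\,ad(x))$ is a genuine polynomial automorphism of $L$ with linear coefficient $ad(x)(v)$ despite $d$ possibly exceeding the characteristic; this is supplied in the statement, so no real obstacle arises — the argument is purely polynomial interpolation plus the Jacobi identity.
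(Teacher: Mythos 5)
Your proof is correct and takes essentially the same approach as the paper's: both recover the coefficients $\tfrac{1}{k!}\,ad(x)^k(v)$ from the values $\exp(\xi_i\,ad(x))(v) \in V$ at $d$ distinct scalars by inverting the Vandermonde system, and conclude $[x,v] \in V$. The only difference is cosmetic --- the paper simply asserts that it suffices to check $[x,v] \in V$ for generators $x \in X$, while you spell this out via the normalizer $N = \{y \in L : [y,V] \subseteq V\}$.
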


\begin{proof}
Fix $x \in X$ and $v \in V$. Since $L$ is generated by $X$, it suffices to show that $[x, v] \in V$. Choose  $\xi_1, \dots, \xi_d$ distinct nonzero elements from $F$. Since $ad(x)^{d} = 0$ and $exp(\xi_iad(x)) \in$ Aut$(L)$, we set $v_i = exp(\xi_iad(x))v \in V$ for $i = 1, \dots, d$. Consider the following system of $d$ linear equations with $d$ unknowns: $v$, $ad_xv$, $\frac{1}{2!} ad_x^2v$, . . . , $\frac{1}{(d-1)!}ad_x^{d-1}v $. 

\begin{eqnarray*}
v + \xi_1ad_xv + \xi_1^2 \frac{ad_x^2v}{2!} + \dots + \xi_1^{d-1}\frac{ad_x^{d-1}v}{(d-1)!} = v_1 \\
\vdots  \\
v + \xi_dad_xv+ \xi_d^2 \frac{ad_x^2v}{2!} + \dots + \xi_d^{d-1}\frac{ad_x^{d-1}v}{(d-1)!} = v_d \\
\end{eqnarray*}  

\noindent The matrix for this linear system is the Vandermonde matrix whose determinant is nonzero since $\xi_i \ne \xi_j$ for $i \ne j$. Therefore the system is invertible, and we conclude that each of the unknowns is an element of $V$, including $ad_x(v) = [x,v]$. 
\end{proof}

\noindent Recall that $L$ is \textit{uniformly ad-integrable} if the adjoint representation is uniformly integrable, that is, for each element $a \in L$, $ad(a)$ is a root of some polynomial $f_a(t) \in F[t]$. It follows that for each $a \in L$, $ad(a)$ has finitely many eigenvalues. The motivation for the study of such algebras follows from the fact that a simple, locally finite Lie algebra embeds into a locally finite associative algebra if and only if it is uniformly ad-integrable. 

\bigskip

\noindent Suppose $L$ is simple and uniformly ad-integrable. For $a \in L$, let $\mu_a(t)$ be the minimal polynomial of $ad(a)$. An element $a \in L$ is \textit{ad-nilpotent} if $ad(a)^NL = 0$ for some $N$, in which case the only root of $\mu_a(t)$ is 0. Define $d(L) = min\{ \mbox{deg $\mu_a(t)$} |  a\mbox{ is not ad-nilpotent} \}$. Since $L$ is simple, there is some element $a \in L$ which is not ad-nilpotent. Otherwise, by Engel's theorem $L$ would be locally nilpotent hence locally solvable, which contradicts lemma \ref{loc nilp}. Therefore, $d(L)$ is well-defined.

\bigskip

\noindent We now prove Theorem 2. 

\begin{theorem*}
\textup{Let $L$ be a simple, locally finite Lie algebra which is ad-integrable over an algebraically closed field $F$ of characteristic $p>3$. If $p > 2d(L) - 2$, then $L \cong [R^{(-)}, R^{(-)}] / (Z(R) \cap [R^{(-)}, R^{(-)}])$ where $R$ is a locally finite, simple associative algebra or $L \cong [K(R, *), K(R, *)]$ where $R$ is a locally finite, simple associative algebra with involution $*$.}
\end{theorem*}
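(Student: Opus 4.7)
The plan is to follow the pattern of Theorem 1 from Section \ref{proof1}, substituting ad-integrability together with the numerical bound $p > 2d(L) - 2$ for the role of local nondegeneracy. Concretely, the steps I would take are: produce a Jordan element, form its associated Jordan algebra, extract a nonzero idempotent, pass to a von Neumann regular element together with a five-term $\mathbb{Z}$-grading of $L$, and then invoke Zelmanov's classification theorem \cite{Z}.

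First I would produce an element $x \in L$ with $ad(x)^{p-1} = 0$. Pick $a \in L$ not ad-nilpotent with $\deg \mu_a(t) = d := d(L)$, and factor $\mu_a(t) = \prod_{i=1}^m (t-\lambda_i)^{n_i}$ over $F$, so that $m \le d$, $\sum n_i = d$, and some $\lambda_i \ne 0$. A standard Leibniz-rule computation shows that the generalized eigenspace decomposition $L = \bigoplus_i L^{(\lambda_i)}$ is a Lie grading: $[L^{(\lambda)}, L^{(\mu)}] \subseteq L^{(\lambda + \mu)}$, with the right side zero when $\lambda + \mu$ is not an $ad(a)$-eigenvalue. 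Choose a genuine eigenvector $x \in L^{(\lambda)}$ for $\lambda \ne 0$. Since $ad(x)$ shifts the $ad(a)$-weight by $\lambda$ and the support has only $m \le d$ weights, while $\lambda, 2\lambda, \dots, m\lambda$ are pairwise distinct in $F^+$ whenever $p > m$, one concludes $ad(x)^m = 0$. The bound $p > 2d - 2$ yields $m \le d \le p - 1$, so $ad(x)^{p-1} = 0$; applying Lemma \ref{kostrikin} (iterated if $m > 3$) produces a nonzero Jordan element $y \in L$.

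The main obstacle is establishing that the locally finite Jordan algebra $J = L_y$ contains a nonzero idempotent. If it did not, $J$ would be nil by Lemma 1 of 3.7 in \cite{J2}, hence locally nilpotent by Albert's theorem, and Proposition \ref{Kloc} would force $y \in K_{loc}(L)$. In Theorem 1 this yielded a contradiction via local nondegeneracy, but here one must argue differently. The strategy is to exploit the bound $p > 2d(L) - 2$ by combining Lemma \ref{premet} with Lemma \ref{ideal}: every sufficiently large finite-dimensional subalgebra of $L$ is generated, modulo its radical, by $ad^3$-nilpotent elements, so by Premet's lemma its Levi component is classical; Lemma \ref{ideal} then lets Aut-invariant subspaces built from the resulting semisimple toral elements descend to ideals of $L$, forcing a semisimple $ad$-action on $L$ that is incompatible with local nilpotence of $J$. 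This spectral/structural argument is the heart of the new work required and where I expect the technical difficulty to concentrate.

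Once a nonzero idempotent in $J$ has been produced, the argument proceeds essentially verbatim from Section \ref{proof1}: Lemma \ref{identities} yields a von Neumann regular element $e' \in L$, the associated $\mathfrak{sl}_2$-triple provides a finite $\mathbb{Z}$-grading $L = L_{-2} + L_{-1} + L_0 + L_1 + L_2$, and Zelmanov's theorem gives cases I--IV. Cases I and II deliver the desired isomorphism after checking local finiteness of $R$ via Lemma \ref{loc finite}; case III is dispatched by the direct-limit TKK construction of Section \ref{proof1}; and case IV is eliminated by the centroid argument of Section \ref{C} when $L$ is infinite dimensional, or by the Kostrikin--Strade--Benkart theorem (\cite{P}) when $L$ is finite dimensional.
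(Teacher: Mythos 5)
Your first step (extracting from the root decomposition relative to a non-ad-nilpotent $a$ of minimal degree a homogeneous element $x$ with $ad(x)^{d}L=0$, then descending to a Jordan element via Lemma \ref{kostrikin}) and your endgame (von Neumann regular element, five-term grading, Zelmanov's classification, local finiteness of $R$ via Lemma \ref{loc finite}) both match the paper. The gap sits exactly where you locate ``the heart of the new work'': you never actually prove that some Jordan element has a non-locally-nilpotent associated Jordan algebra, and the mechanism you sketch for it does not work. Local nilpotence of $J=L_y$ is a statement about the product $a\bullet b=[[a,y],b]$; it is not contradicted by the existence of ``semisimple toral elements'' or by ``a semisimple $ad$-action on $L$'', and it is unclear which Aut-invariant subspace you intend to feed into Lemma \ref{ideal} or why its being an ideal would yield the incompatibility you assert. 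Moreover Lemma \ref{premet} applies to \emph{simple} finite-dimensional algebras, so you must first manufacture a simple quotient of a suitable finite-dimensional subalgebra before invoking it.

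The paper closes this gap differently. Having used Lemma \ref{ideal} (applied to $V=\mathrm{span}\{x \mid ad_x^3L=0\}$) to show $L$ is generated by Jordan elements, it assumes for contradiction that $L_x$ is locally nilpotent for \emph{every} Jordan element $x$ and picks a nonsolvable finite-dimensional subalgebra $L_1$ generated by Jordan elements of minimal dimension. Minimality together with the Kostrikin--Zelmanov sandwich algebra theorem \cite{KZ} --- an ingredient entirely absent from your proposal --- forces $[L_1,L_1]=L_1$; then $L_1'=L_1/I$ for a maximal ideal $I$ is simple and generated by $ad^3$-nilpotent elements, hence classical and nondegenerate by Lemma \ref{premet}; but Proposition \ref{Kloc} applied to a Jordan generator $a_i$ with $\overline{a_i}\neq 0$ places $\overline{a_i}$ in $K_{loc}(L_1')$, a contradiction. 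Note also that the paper only needs \emph{some} Jordan element whose Jordan algebra contains an idempotent, whereas you try to produce one inside the specific $L_y$ coming from Kostrikin descent. A further minor slip: the Kostrikin--Strade--Benkart theorem concludes that $L$ is of \emph{classical} type, which includes $G_2, F_4, E_6, E_7, E_8$, so it cannot be used to eliminate case IV when $L$ is finite dimensional.
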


\begin{proof}
Choose $a \in L$ so that deg $\mu_a(t) = d(L) = d$. Suppose $L = \sum_{\alpha \in \Phi} L_{\alpha}$ is the decomposition of $L$ into root spaces with respect to $a$. We see that $|\Phi| \le d(L)$. Set $\Delta = \{ 0 \ne \alpha \in \Phi | L_{\alpha} \ne (0)\}$, which is nontrivial since $a$ is not ad-nilpotent. By lemma \ref{simplegraded}, $L$ is generated by $\{ L_{\alpha} | \alpha \in \Delta \}.$ 

\bigskip

\noindent Choose $\alpha \in \Delta$, $\beta \in \Phi$, $x \in L_{\alpha}$, and $y \in L_{\beta}$. Since $p > 2d - 2$, $p > d$ as well. Thus $\beta, \beta + \alpha, \dots, \beta + d\alpha$ are all distinct roots. Since $|\Delta| \le d$, the sets $L_{\beta}, ad(x_{\alpha})L_{\beta}, ad(x_{\alpha})^2L_{\beta}, \dots, ad(x_{\alpha})^dL_{\beta}$ cannot all be nonzero, which implies that $ad(x_{\alpha})^dL_{\beta} = (0)$. Therefore, for every $x \in L_{\alpha}$, for $\alpha \in \Delta$, $ad(x)^d L = (0)$, and thus $exp(\xi ad(x))$ is a well-defined automorphism of $L$ for any $\xi \in F$. 

\bigskip


\noindent We have shown that for any $\alpha \in \Delta$ and $x \in L_{\alpha}$, $ad(x)^dL = 0$. Thus by lemma \ref{kostrikin}, there is some $0 \ne y \in L$ such that $ad(y)^3L = 0$, that is, L contains a Jordan element. Thus $V = \mbox{span}\{ x \in L | ad(x)^3L = 0\}$ is a nonzero subspace of $L$. Since $V$ is invariant under Aut$(L)$, we can apply lemma \ref{ideal} with $X = \{ L_{\alpha} | \alpha \in \Delta \}$ to show that $V$ is a nonzero ideal of $L$. Thus $V = L$ and $L$ is spanned, hence generated, by Jordan elements. 

\bigskip

\noindent Suppose $0 \ne x \in L$ is a Jordan element. Then if $J = L_x$ is \textit{not} locally nilpotent, then $J$ contains a nonzero idempotent and we proceed as in the proof of theorem 1. Suppose to the contrary that for every Jordan element $x \in L$, the Jordan algebra $L_{x}$ is locally nilpotent. 

\bigskip

\noindent Suppose $L_1 =$ Lie$(a_1, \dots, a_m)$ is a finite dimensional subalgebra generated by Jordan elements $a_1, \dots, a_m$ which is nonsolvable. Such an algebra must exist since $L$ is simple, so not locally solvable, and $L$ is generated by Jordan elements. Furthermore, suppose $L_1$ is such an algebra of minimal dimension. 

\bigskip

\noindent We will prove that $[L_1, L_1] = L_1$. Suppose not, that is, $[L_1, L_1] \ne L_1$. Let $V = \mbox{span}\{ x \in [L_1, L_1] | ad(x)^3L = 0\}$. Since $V$ is invariant under Aut$(L_1)$, $V$ is an ideal of $L_1$, hence $V$ is a subalgebra of $L_1$ generated by Jordan elements. By minimality of $L_1$, $V$ must be solvable. Thus we have shown that all Jordan elements from $[L_1, L_1]$ lie in the radical of $L_1$.

\bigskip

\noindent Let $R$ denote the solvable radical of $L_1$ and $\overline{L_1} = L_1 / R$. For each Jordan element $a_i$ and for each $b \in L_1$, $ad_{a_i}^2(b) \in [L_1, L_1]$ is a Jordan element by lemma \ref{identities}. Thus, $ad^2_{\overline{a_i}}\overline{L_1} = (0)$, so $\overline{L_1}$ is a finite dimensional Lie algebra generated by sandwich elements, that is, by elements $a \in \overline{L_1}$ such that $ad(a)^2\overline{L_1} = (0)$. By \cite{KZ}, $\overline{L_1}$ is nilpotent, thus $L_1$ is solvable, which is a contradiction.

\bigskip

\noindent We have proved that $L_1 = [L_1, L_1]$. Choose a maximal ideal $I \triangleleft L_1$. Then $L'_1 = L_1/I$ has no proper ideals. Since $L_1 = [L_1, L_1]$, $L'_1$ is not abelian. Therefore, $L'_1$ is a simple finite dimensional Lie algebra. Since $L'_1$ is generated by elements $a$ satisfying $ad^3_a(L'_1) = (0)$, by lemma \ref{premet} $L'_1$ must be classical hence nondegenerate. 

\bigskip

\noindent Recall that for each $a_i \in L_1$, we have assumed that the Jordan algebra $L_{a_i}$ is locally nilpotent. Thus $(L_1)_{a_i}$ and $(L'_1)_{a_i}$ are nilpotent. Fix $i$ such that $0 \ne \overline{a_i} \in L'_1$, which is possible since $I \ne L_1$. Since $(L'_1)_{a_i}$ is nilpotent, by proposition \ref{Kloc}, $0 \ne\overline{a_i}$ is in $K_{loc}(L'_1)$, hence $L'_1$ cannot be nondegenerate. This contradiction completes the proof.
\end{proof}

\bigskip

\noindent We remark that the reason we impose the condition $p> 2d(L)-2$ is to ensure that $exp(ad(x))$ is a well-defined automorphism of $L$ when $ad(x)^d = 0$. When the ground field has zero characteristic, this is always true. Thus, the proofs of theorems 1 and 2 give a new proof of the main result of \cite{BBZ}: Suppose $F$ is an algebrically closed field of characteristic zero and suppose $L$ is a simple, locally finite Lie algebra over $F$ which embeds into a locally finite associative algebra. Then $L$ is uniformly ad-integrable, and the proof of theorem 2 shows that $L$ contains an ad-nilpotent element. Then the proof of theorem 1 implies that $L \cong [R^{(-)}, R^{(-)}] / (Z(R) \cap [R^{(-)}, R^{(-)}])$ where $R$ is a locally finite, simple associative algebra or $L \cong [K(R, *), K(R, *)]$ where $R$ is a locally finite, simple associative algebra with involution $*$. In fact, both of these conclusions imply that $L \cong [K(R, *), K(R, *)]/ (Z(R) \cap [K(R, *), K(R, *)])$ for a locally finite, involution simple associative algebra $R$. Since $(Z(R) \cap [K(R, *), K(R, *)]) = (0)$ when the characteristic of the ground field is zero, we obtain the result of Bahturin, Baranov, and Zalesskii. 

\section{\large \textbf{Proof of Theorem 3}}

\noindent The purpose of this section is to address the following question: If $R$ is a simple, locally finite associative algebra with involution, is it true that $K(R, *) = [K(R, *), K(R, *)]$? Certainly if $R$ is locally simple, that is, $R = \bigcup_i R_i$ where each $R_i$ is simple, then $K(R, *) = [K(R, *), K(R, *)]$, as when $R = M_{\infty}(F)$. In this section we show this does not hold in general for a locally finite associative algebra $R$. Our construction works over an algebraically closed field of arbitrary characteristic.

\begin{theorem*}
There exists a simple, locally finite associative algebra $R$ with involution $*$ so that $K(R, *) \ne [K(R, *), K(R, *)]$.
\end{theorem*}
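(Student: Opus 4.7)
The plan is to build $R$ as the inductive limit of a carefully chosen tower of $*$-invariant finite-dimensional subalgebras. The source of non-perfectness comes from the following observation: if $A$ is any associative algebra and we form $A \oplus A^{\mathrm{op}}$ with the exchange involution $(a, b)^* = (b, a)$, then $K(A \oplus A^{\mathrm{op}}, *) = \{(a, -a) : a \in A\}$ is canonically isomorphic to $A^{(-)}$ as a Lie algebra, and when $A = M_{k}(F)$ the map $(a, -a) \mapsto \mathrm{tr}(a)$ is a nonzero linear functional on $K$ vanishing on $[K, K]$. The goal is to propagate such a functional through the inductive limit while arranging for the limit itself to be simple.

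First I would define a directed family $R_{1} \subset R_{2} \subset \cdots$ of finite-dimensional $*$-invariant subalgebras, where each $R_{n}$ has a $*$-invariant quotient isomorphic to $M_{k_{n}}(F) \oplus M_{k_{n}}(F)^{\mathrm{op}}$ with exchange involution, and each inclusion $R_{n} \hookrightarrow R_{n+1}$ is a $*$-equivariant embedding. The embeddings must be chosen to achieve two goals simultaneously: \textbf{(S)} the image of any nonzero ideal of $R_{n}$ generates, as an ideal of $R_{n+1}$, all of $R_{n+1}$, so that by the standard direct-limit argument the limit $R = \varinjlim R_{n}$ is simple (any nonzero ideal $J \triangleleft R$ meets some $R_{n}$ nontrivially, and then $J \cap R_{n+1} = R_{n+1}$, so $J = R$); \textbf{(T)} there is a compatible sequence of linear functionals $\chi_{n} \colon K(R_{n}, *) \to F$ with $\chi_{n}([x, y]) = 0$ for $x, y \in K(R_{n}, *)$ and $\chi_{n+1}|_{K(R_{n}, *)} = \chi_{n}$. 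Local finiteness of $R$ is immediate from the directed-limit construction.

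Given such a tower, the functionals $\chi_{n}$ assemble into a linear map $\chi \colon K(R, *) \to F$ that vanishes on $[K(R, *), K(R, *)]$. Choosing $R_{1}$ so that $K(R_{1}, *)$ contains an element $k_{0}$ with $\chi_{1}(k_{0}) \ne 0$ --- for instance the image of $(I_{k_{1}}, -I_{k_{1}})$ under the trace --- exhibits $k_{0} \in K(R, *) \setminus [K(R, *), K(R, *)]$ and completes the argument.

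The principal obstacle is engineering embeddings that simultaneously satisfy \textbf{(S)} and \textbf{(T)}. The most natural ``mixing'' embeddings --- those that interlace $M_{k_{n}}(F)$ and $M_{k_{n}}(F)^{\mathrm{op}}$ inside a single larger matrix block of $R_{n+1}$ --- typically send the trace element $(I_{k_{n}}, -I_{k_{n}})$ to an element with vanishing trace in $R_{n+1}$, destroying $\chi$ in the limit; conversely, the most natural trace-preserving embeddings, such as the diagonal tensor embeddings $a \mapsto a \otimes I$ applied separately in each summand, fail to connect the two ideals $M_{k_{n}}(F) \oplus 0$ and $0 \oplus M_{k_{n}}(F)^{\mathrm{op}}$, so that the limit is isomorphic to $M_{\infty}(F) \oplus M_{\infty}(F)^{\mathrm{op}}$, which is not simple. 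Resolving this tension is the crux of the proof and will require a clever embedding --- likely one that uses auxiliary matrix blocks in $R_{n+1}$ to mix the two components enough to guarantee simplicity while preserving the trace functional up to a uniform rescaling that still yields a well-defined limit $\chi$.
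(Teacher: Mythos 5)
Your framework is exactly the paper's --- exchange-type involution on $M_n(F)\oplus M_n(F)$, trace as a functional on $K\cong M_n(F)^{(-)}$ killing $[K,K]$, and a tower of $*$-equivariant embeddings that must simultaneously mix the two summands (for simplicity of the limit) and preserve the trace (so the functional survives). But you stop precisely where the proof begins: the theorem is an existence statement whose entire content is the construction of such an embedding, and you explicitly defer it (``will require a clever embedding --- likely one that uses auxiliary matrix blocks''). That is a genuine gap, not a detail: as you yourself observe, the naive candidates fail in one direction or the other, so the existence of a compatible embedding is exactly the point at issue.

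The resolution requires no auxiliary blocks; it is a choice of unequal multiplicities. Take $p=2k+1$ an odd prime, $n_i=p^i$, $R_i=M_{n_i}(F)\oplus M_{n_i}(F)$ with $(A,B)^*=(B^T,A^T)$ (equivalent to your $A\oplus A^{\mathrm{op}}$ picture), and
$$\phi_i(M,N)=\bigl(\mathrm{diag}(\underbrace{M,\dots,M}_{k+1},\underbrace{N,\dots,N}_{k}),\ \mathrm{diag}(\underbrace{N,\dots,N}_{k+1},\underbrace{M,\dots,M}_{k})\bigr).$$
Since both $M$ and $N$ occur in each component of the image, any nonzero $(M,N)$ is sent to an element with both components nonzero, and the ideal it generates in $R_{i+1}=M_{n_{i+1}}(F)\oplus M_{n_{i+1}}(F)$ is everything --- your condition \textbf{(S)}. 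On $K(R_i,*)=\{(M,-M^T)\}$ the first component of the image has trace $(k+1)\mathrm{Tr}(M)-k\,\mathrm{Tr}(M)=\mathrm{Tr}(M)$, so the trace functional passes to the limit with rescaling factor exactly $1$ --- your condition \textbf{(T)}. The tension you identified is dissolved by making the two multiplicities \emph{unequal} (the trace is scaled by their difference, which is $1$) yet both \emph{positive} (so the summands mix); this is also why the block sizes are powers of an odd prime, so that $n_{i+1}/n_i=p=(k+1)+k$ admits such a splitting. With that embedding in hand, the rest of your argument (simplicity via the direct-limit ideal argument, local finiteness, and the nonvanishing of $\chi$ on the image of $(I,-I)$) goes through as you describe.
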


\begin{proof}
Let $F$ be an algebraically closed field, and let $p$ be an odd prime, $p = 2k+1$. Let $n_i = p^i$ for $i \ge 1$ and  $R_i = M_{n_i}(F) \oplus M_{n_i}(F)$ with involution $*$ given by $(A, B)^* = (B^T, A^T)$.  Let $\phi_i: R_i \hookrightarrow R_{i+1}$ be an embedding of signature $(k+1, k, 0)$, that is,

$$
\phi_i(M, N) =(diag(\underbrace{M, \dots, M}_{k+1}, \underbrace{N, \dots, N}_{k}),diag(\underbrace{N, \dots, N}_{k+1}, \underbrace{M, \dots, M}_{k}))
$$
 
\noindent It is easy to check that each $\phi_i$ is a map of involutive algebras. Define $R$ as the direct limit of the $R_i$ under each embedding, so $R = \varinjlim R_i$. Then $R$ is a locally finite, locally semisimple involutive algebra. 

\bigskip

\noindent We will show that $R$ is simple. We identify $R_i$ with its image in $R_{i+1}$. Suppose $I$ is a nonzero ideal of $R$ and $I_i = R_i \cap I$, so $I_{i} = I_{i+1} \cap R_i$ for all $i$. Since $I \ne (0)$, there is some $0 \ne (M,N) \in I_l$ for some $l$. Then $\phi_k(M, N) = (M', N')$ where $M'\ne 0$ and $N'\ne 0$. Since $M_{n_{i+1}}(F)$ is a simple ring, $id_{R_{l+1}}(M', N') = id_{R_{l+1}}(M', 0) + id_{R_{l+1}}(0, N') = M_{n_{l+1}}(F) \oplus (0) + (0) \oplus M_{n_{l+1}}(F) = R_{l+1}$. This shows that $R_{l+1} = I_{l+1}$ so $I_l = I_{l+1} \cap R_l = R_{l+1} \cap R_l = R_l$ and thus, $I_i = R_i$ for all $i \le l$. The above argument also demonstrates that $R_i = I_i$ for all $i \ge l$ as well, thus $I = R$. Therefore, $R$ must be simple.

\bigskip

\noindent We now show that $K(R, *) \ne [K(R, *), K(R, *)]$. Note that $K(R_i, *) = \{ (M, -M^T)\} \cong M_{n_i}(F)^{(-)}$. Choose $(M, -M^T) \in K(R_i, *)$ such that the trace of $M$ is nonzero. Then $\phi_i(M, -M^T) = (M_1, -M_1^T)$ and Tr$(M_1) = (k+1 - k)$Tr$(M) = $Tr$(M) \ne 0$. Therefore in any subalgebra of $R$, $(M, -M^T)$ is always mapped to an element $(M', -M'^T)$ such that $M'$ has nonzero trace.

\bigskip

\noindent Suppose $(M, -M^T) \in [K(R, *), K(R, *)]$, so we have that 

$$(M, -M^T) = \sum_j [(a_j, -a_j^T), (b_j, -b_j^T)] = (\sum_j [a_j, b_j], \sum_j [-a_j^T, -b_j^T])$$

\noindent in some subalgebra $R_l$, where $(M, -M^T)$ is identified with its image in $R_l$. But $M$ has nonzero trace, whereas $\sum_j [a_j, b_j]$ must be of trace zero. This is a contradiction, therefore $(M, -M^T)$ cannot be written as a sum of commutators. Therefore, $K(R, *) \ne [K(R, *), K(R, *)]$.
\end{proof}

\bigskip

\noindent Lastly, we note that the algebra $R$ we constructed in the previous proof has symmetry type ``two-sided weakly non-symmetric" according to \cite{Ba}. Therefore, by theorem 5.2 of $\cite{Ba}$, $R$ cannot be isomorphic to a direct limit of involution simple algebras of orthogonal or symplectic type, so $R$ is not locally simple. This is consistent with our result that $K(R, *) \ne [K(R, *), K(R, *)]$.

\small{

\bigskip

\textsc{University of California, San Diego, Department of Mathematics, 9500 Gilman Drive
La Jolla, CA 92093-0112}

\medskip

\noindent \textit{Email address:} \texttt{jhennig@math.ucsd.edu}}

\end{document}